\newtheorem{teo}{Theorem}[section]
\theoremstyle{definition}
\def\ep{\varepsilon}
\def\a{\alpha}
\def\R{\mathbb R}
\begin{document}
\title[Asymptotics for a fully nonlocal heat equation]{Large-time behavior for a fully nonlocal heat equation}

\author[Cort\'{a}zar,  Quir\'{o}s \and Wolanski]{Carmen Cort\'{a}zar,  Fernando  Quir\'{o}s, \and Noemi Wolanski}

\address{Carmen Cort\'{a}zar\hfill\break\indent
	Departamento  de Matem\'{a}tica, Pontificia Universidad Cat\'{o}lica
	de Chile \hfill\break\indent Santiago, Chile.} \email{{\tt
		ccortaza@mat.puc.cl} }

\address{Fernando Quir\'{o}s\hfill\break\indent
	Departamento  de Matem\'{a}ticas, Universidad Aut\'{o}noma de Madrid,
	\hfill\break\indent 28049-Madrid, Spain,
	\hfill\break\indent and Instituto de Ciencias Matem\'aticas ICMAT (CSIC-UAM-UCM-UC3M),
	\hfill\break\indent 28049-Madrid, Spain.} \email{{\tt
		fernando.quiros@uam.es} }

\address{Noemi Wolanski \hfill\break\indent
	IMAS-UBA-CONICET, \hfill\break\indent Ciudad Universitaria, Pab. I,\hfill\break\indent
	(1428) Buenos Aires, Argentina.} \email{{\tt wolanski@dm.uba.ar} }

\thanks{This project has received funding from the European Union's Horizon 2020 research and innovation programme under the Marie Sklodowska-Curie grant agreement No.\,777822. C.\,Cort\'azar supported by  FONDECYT grant 1190102 (Chile). F.\,Quir\'os supported by the Spanish Ministry of Science and Innovation, through  project MTM2017-87596-P and through the  \lq\lq Severo Ochoa Programme for Centres of Excellence in R\&D'' (SEV-2015-0554), and by the Spanish National Research Council, through the \lq\lq Ayuda extraordinaria a Centros de Excelencia Severo Ochoa'' (20205CEX001). N.\,Wolanski supported by
	CONICET PIP625, Res. 960/12, ANPCyT PICT-2012-0153, UBACYT X117 and MathAmSud 13MATH03 (Argentina).}

\keywords{Fully nonlocal heat equation, Caputo derivative, fractional Laplacian, asymptotic behavior.}

\subjclass[2010]{%
	35B40, 
	35R11, 
	35R09, 
	45K05. 
}

\date{}

\begin{abstract}
We study the large-time behavior in  all $L^p$ norms and in different space-time scales of solutions to  a nonlocal heat equation in $\mathbb{R}^N$ involving a Caputo  $\alpha$-time derivative and a power of the Laplacian $(-\Delta)^s$, $s\in (0,1)$, extending recent results by the authors for the case $s=1$. The initial data are assumed to be integrable, and, when required, to be also in $L^p$. The main novelty with respect to the case $s=1$ comes from the behaviour in fast scales, for which, thanks to the fat tails of the fundamental solution of the equation, we are able to give results that are not available neither for the case $s=1$ nor, to our knowledge, for the standard heat equation, $s=1$, $\alpha=1$.
\end{abstract}

\maketitle


\section{Introduction}
\label{sect-Introduction} \setcounter{equation}{0}
The aim of this paper is to study the large-time behavior of solutions to the \emph{fully nonlocal} Cauchy problem
\begin{equation}
\label{eq:main}
\partial_t^\a u+(-\Delta)^s u=0\quad\mbox{in }\R^N\times(0,\infty),\qquad
u(\cdot,0)=u_0\ge0\quad\mbox{in }\R^N,
\end{equation}
where $\partial_t^\alpha$, $\alpha\in(0,1)$, denotes the Caputo $\alpha$-derivative, introduced in~\cite{Caputo-1967}, defined for smooth functions by 
$$
\displaystyle\partial_t^\alpha u(x,t)=\frac1{\Gamma(1-\alpha)}\,\partial_t\int_0^t\frac{u(x,\tau)- u(x,0)}{(t-\tau)^{\alpha}}\, d\tau, 
$$
and $(-\Delta)^s$, $s\in(0,1)$,  is the usual $s$ power of the Laplacian, defined for smooth functions by
$$
\displaystyle (-\Delta)^s v(x)=c_{N,s}\operatorname{P.V.}\int_{\R^N}\frac{v(x)-v(y)}{|x-y|^{N+2s}}\,dy.
$$
Here $c_{N,s}$ is a positive normalization constant, chosen so that $(-\Delta)^{s}=\mathcal{F}^{-1}(|\cdot|^{2s}\mathcal{F})$, where $\mathcal{F}$ denotes Fourier transform.
The initial data are always assumed to be non-negative, integrable and non-trivial, and when required, to  belong also to  $L^p(\mathbb{R}^N)$ for some $p\in (1,\infty]$.

Fully nonlocal heat equations, like~\eqref{eq:main},  nonlocal both in space and time, are useful to model situations with long-range interactions and memory effects, and have been proposed for example to describe plasma transport~see~\cite{delCastilloNegrete-Carreras-Lynch-2004, delCastilloNegrete-Carreras-Lynch-2005}; see also~\cite{Compte-Caceres-1998,Metzler-Klafter-2000,Zaslavsky-2002,Cartea-delCastilloNegrete-2007} for further models that use such equations. 

Despite their wide number of
 applications,  the mathematical study of fully nonlocal heat equations started only quite recently.  In the interesting paper~\cite{Kemppainen-Siljander-Zacher-2017}, Kemppainen, Siljander and Zacher proved, among several other things,  that if both $u_0$ and $\mathcal{F}(u_0)$ belong to $L^1(\mathbb{R}^N)$, then  problem~\eqref{eq:main} has a unique bounded classical solution given by
\begin{equation}
\label{eq:convolution}
 u(x,t)=\int_{\R^N}Z(x-y,t)u_0(y)\,dy,
\end{equation}
where $Z$ is the fundamental solution of the equation, which  has a Dirac mass as initial datum. 
If $u_0$ is only known to be in $L^1(\mathbb{R}^N)$, problem~\eqref{eq:main} does not have in general a classical solution. However, the function $u$ in~\eqref{eq:convolution} is still well defined, and is a solution of~\eqref{eq:main} in a generalized sense~\cite{Gripenberg-1985,Kemppainen-Siljander-Zacher-2017}. It is to this kind of solutions,  sometimes denoted in the literature as \emph{mild} solutions, that we will always refer to in the present paper. See also~\cite{Gorenflo-Luchko-Yamamoto-2015} for a notion of weak solution. 

\medskip

\noindent\emph{Notations. } As is common in asymptotic analysis, $f\asymp g$ will mean that there are constants $\nu,\mu>0$ such that
$\nu g\le f\le \mu g$. The spatial integral of the solution, which is preserved along the evolution, and is hence equal to $\int_{\mathbb{R}^N}u_0$, will be denoted by $M $. The ball of radius $r$ centered at the origin will be denoted by $B_r$.

\medskip

The main difficulty when studying the large-time behavior of solutions to~\eqref{eq:main}, as compared with the case $\alpha=1$, is that $Z$ is singular in space at the origin if $2s\le N$, with a singularity as that of the fundamental solution for $(-\Delta)^s$ in dimension $N$, 
\begin{equation*}
\label{eq:fundamental.solution}
E_{N,s}(\xi)=\begin{cases}
|\xi|^{2s-N},&2s<N,\\
-\ln |\xi|,&2s=N=1.
\end{cases}
\end{equation*}
To be more precise, $Z$ has a self-similar form,
$$
Z(x,t)=t^{-\frac{\alpha N}{2s}}F(\xi),\quad \xi=x t^{-\frac{\alpha}{2s}},
$$
for a radially symmetric positive profile $F\in C^\infty(\mathbb{R}^N\setminus\{0\})$ satisfying
\begin{eqnarray}
\label{eq:global.bounds.F}
&F(\xi)\le C
\begin{cases}
E_{N,s}(\xi),&2s<N,\\
1+|E_{1,\frac12}(\xi)|,&2s=N=1,\\
1,&2s>N=1,
\end{cases}\qquad &\xi\neq0,\\[8pt]
\label{eq:bounds.F.infty}
&F(\xi)\le C |\xi|^{-(N+2s)},\quad N\ge 1, s\in(0,1),\qquad &|\xi|\ge 1,
\end{eqnarray}
and also
\begin{eqnarray}
\label{eq:constant.Nge2s}
F(\xi)/E_{N,s}(\xi)\to\kappa\quad&\text{as }|\xi|\to0,\quad& 2s\le N,\\[8pt]
\notag
F(\xi)\to\kappa\quad&\text{as }|\xi|\to0,\quad&2s> N=1,\\[8pt]
\label{eq:constant.infty}
|\xi|^{N+2s}	F(\xi)\to \hat\kappa\quad&\text{as }|\xi|\to\infty,\quad &N\ge 1, s\in(0,1),
\end{eqnarray}
for some positive constants $\kappa=\kappa(N,s)$ and $\hat\kappa=\hat\kappa(N,s)$; see~\cite{Kim-Lim-2016,Kemppainen-Siljander-Zacher-2017}. In particular $F$ is continuous at the origin if and only if $2s>N=1$, and  $Z(\cdot,t)\in L^p(\mathbb{R}^N)$  for $t>0$ if and only if $p$ is \emph{subcritical},
\begin{equation}
\label{eq:subcritical.range}
\tag{S}
p\in [1,\infty]\quad\text{if }2s>N=1,\qquad p\in[1,p_c)\quad\text{if }N\ge 2s, \quad \text{where }
p_c=\begin{cases}
\frac{N}{N-2s}&\text{if }N>2s,\\
\infty&\text{if }N=2s.
\end{cases}
\end{equation}
The papers~\cite{Kim-Lim-2016,Kemppainen-Siljander-Zacher-2017} also provide estimates for the gradient,
\begin{equation}
\label{eq:estimates.gradient}
\begin{array}{ll}
|DF(\xi)|\le C
\begin{cases}
|\xi|^{2s-1-N},&2s\le N,\\
1,&2s>N=1,
\end{cases}\qquad &\xi\neq0,\\[8pt]
|DF(\xi)|\le C|\xi|^{-(N+2s+1)},\qquad &|\xi|\ge 1,
\end{array}
\end{equation}
and in fact for all subsequent derivatives.

If $p$ is subcritical, that is, if~\eqref{eq:subcritical.range} holds, there is an $L^1$--$L^p$ smoothing effect: if the initial datum is in $L^1(\mathbb{R}^N)$, then $u(\cdot,t)\in L^1(\mathbb{R}^N)\cap L^p(\mathbb{R}^N)$ for every $t>0$. Moreover, we have the (sharp) decay rate~$\|u(\cdot,t)\|_{L^p(\mathbb{R}^N)}\asymp t^{-\frac{\alpha N}{2s}(1-\frac1p)}$; see~\cite{Kemppainen-Siljander-Zacher-2017}. In that same paper the authors also proved that in that range
$$
t^{\frac{\alpha N}{2s}(1-\frac1p)}\|u(\cdot,t)-M Z(\cdot,t)\|_{L^p(\R^N)}\to0\quad\mbox{as }t\to\infty,
$$
under the additional technical assumption $p<N/(N-2s+1)$. We will get rid of this restriction in Theorem~\ref{L^p}.

In the critical case $N>2s$,  $p=p_c$,  $Z(\cdot,t)$ belongs to the Marcinkiewicz space
$M^{p_c}(\mathbb{R}^N)$, which coincides with the Lorentz space $L^{p,\infty}(\mathbb{R}^N)$, defined through the norm
\[
\|g\|_{M^p(\mathbb{R}^N)}=\sup\Big\{\lambda\big|\{x\in\R^n:|g(x)|>\lambda\}\big|^{1/p},\ \lambda>0\Big\}, \quad p\in [1,\infty),
\]
and there is a weak smoothing effect: solutions with an integrable initial datum will belong to~$L^1(\mathbb{R}^N)\cap M^{p_c}(\mathbb{R}^N)$ for any later time. As proved also in \cite{Kemppainen-Siljander-Zacher-2017},  we have the decay rate~$\|u(\cdot,t)\|_{M^{p_c}(\mathbb{R}^N)}\asymp t^{-\alpha}$, but no global asymptotic profile in $M^{p_c}$-norm was known.

In the supercritical range, $2s<N$, $p>p_c$,  there is no smoothing effect, and in order to have $u(\cdot, t)\in L^p(\mathbb{R}^N)$ we have to require in addition $u_0\in L^p(\mathbb{R}^N)$. Under this extra assumption, it was proved in~\cite{Kemppainen-Siljander-Zacher-2017,Cheng-Li-Yamamoto-2017} by means of Fourier techniques that
$$
\|u(\cdot,t)\|_{L^2(\mathbb{R}^N)}\asymp t^{-\alpha},
$$ 
a result which was shown to be sharp.
In this supercritical range there is no information available for $p\neq2$, nor about asymptotic profiles. However, the above results are enough to  observe the interesting \emph{critical dimension phenomenon}: the decay rate does not improve when the dimension is increased after $N>4s$. This phenomenon, which was already observed when $s=1$ in~\cite{Kemppainen-Siljander-Vergara-Zacher-2016}, is due to the memory introduced by the $\alpha$-Caputo derivate, and does not take place when $\alpha=1$. 

In order to obtain further information on the large-time behavior of solutions to~\eqref{eq:main} we will follow a different approach, used very recently by the authors in~\cite{CQW} to deal with the case $s=1$, namely, to study the decay rates and profiles separately for different space-time scales.

The equation has a \emph{characteristic scale}, $|x|\asymp t^{\frac\alpha{2s}}$, dictated by its scaling invariance.
The behavior in this scale is given by $MZ$, no matter the value of $p$,
$$
t^{\frac{\alpha N}{2s}(1-\frac1p)}\|u(\cdot,t)-M
Z(\cdot,t)\|_{L^p(\{|x|\asymp t^{\frac\a{2s}}\})}\to0\quad\mbox{as }t\to\infty,
$$ 
under the additional decay assumption
$u_0\in \mathcal{D}_N$ if $p$ is not subcritical, where
$$
\mathcal{D}_\beta=\{f: \text{there exist } C,R>0 \text{ such that } |x|^\beta|f(x)|\le C\text{ for all } x\in \mathbb{R}^N, |x|>R\}.
$$
This asymptotic result implies the sharp decay rate
$$
\|u(\cdot,t)\|_{L^p(\{|x|\asymp t^{\frac\a{2s}}\})}\asymp t^{-\frac{\alpha N}{2s}(1-\frac1p)}.
$$

In \emph{intermediate scales}, $|x|\asymp g(t)$ with $g$ \emph{growing slowly to infinity},
\begin{equation}
\label{eq:intermediate.scales}
g(t)\to\infty,\quad g(t)=o(t^{\frac\alpha{2s}})\quad\text{as }t\to\infty,
\end{equation}
the large-time behavior is still given by $M$ times the fundamental solution, 
$$
\frac{\|u(\cdot,t)-M
	Z(\cdot,t)\|_{L^p(\{|x|\asymp g(t)\})}}{\|u(\cdot,t)\|_{L^p(\{|x|\asymp g(t)\})}}\to0\quad\mbox{as }t\to\infty,
$$
again with the additional assumption $u_0\in\mathcal{D}_N$ if $p$ is not subcritical,
but now the decay rate depends on the precise scale and on the size of $2s$ relative to $N$, with logarithmic corrections in the critical case $2s=N=1$. 
$$
\|u(\cdot,t)\|_{L^p(\{|x|\asymp g(t)\})}\asymp\begin{cases}
\displaystyle
t^{-\alpha} (g(t))^{2s-N\big(1-\frac 1p\big)},&2s<N,\\[10pt]
\displaystyle\frac{t^{-\a}(g(t))^{
		\frac1p}}{|\log (g(t)t^{-\alpha})|},&2s=N=1,\\[10pt]
\displaystyle t^{-\frac\alpha{2s}}(g(t))^{\frac 1p},&2s>N=1.	
\end{cases}
$$
In these intermediate scales $\xi=xt^{-\frac\a{2s}}\to0$, and hence one may use the behavior of the profile $F$ at the origin to give a more precise description of the asymptotic limit. Thus, solutions will approach a multiple of $E_{N,s}$ if $2s<N$, and a constant if $2s\ge N=1$; see Theorem~\ref{teo-intermediate-supercritical}.

It is in compact sets, and for $2s<N$, where we find the main qualitative difference with respect to the standard local heat equation: the decay rate is $t^{-\alpha}$, independently of the values of $s$ and $N$ (and $p$), and the final profile is a multiple of the Newtonian potential of the initial datum,
$$
\Phi(x):=\int_{\mathbb{R}^N}\frac{u_0(x-y)}{|y|^{N-{2s}}}\,dy,
$$
assuming also $u_0\in L^1(\R^N)\cap L^{p}_{\rm loc}(\R^N)$ if $p$ is not subcritical, so that $\Phi\in L^p_{\rm loc}(\mathbb{R}^N)$; see Theorem~\ref{teo-compacto Lp subcritico} for a detailed statement. This phenomenon, which  had already been observed for the case $s=1$ in~\cite{CQW}, is due to the memory introduced by the Caputo derivative, and does not take place for $\alpha=1$.

For the critical value  $2s=N=1$ the  decay rate in compact sets is not given exactly by $t^{-\alpha}$; it has a logarithmic correction that makes the decay a bit slower. Moreover, the asymptotic profile is not given by the Newtonian potential of the initial datum, but by the constant $M\kappa\alpha$, with $\kappa$ as in~\eqref{eq:constant.Nge2s}. If $2s> N=1$, the asymptotics are also described by a constant, namely $MF(0)$,  with a decay rate~$t^{-\frac\alpha{2s}}$, where $F$ is the profile of the fundamental solution. The details for these two cases are given respectively in Theorems~\ref{teo-compactos N=2 todo p} and~\ref{thm:compact.N=1}.

The results that we have described so far, which are gathered in Section~\ref{sect-RN}, are definitely interesting. However, they are qualitatively similar to the ones obtained in~\cite{CQW} for the case $s=1$ and, once we have the required estimates for $Z$ at hand, their proofs can be adapted from the ones for that case. Hence, we  omit most of them,  and write down only those where some more significant difference appears.

The main novelty of this paper, comes when we consider \emph{fast scales}
$$
|x|\asymp g(t)\quad\text{with }
g(t)t^{-\frac\alpha{2s}}\to \infty\quad\text{as }t\to\infty.
$$
Our asymptotic results in these scales, given in Section~\ref{sect:fast.scales}, depend strongly on the decay of the initial datum at infinity, and are better than the ones available for $s=1$,  even when $\alpha=1$. The difference stems from the powerlike decay of the profile $F$ at infinity~\eqref{eq:constant.infty}, that contrasts with its exponential decay when $s=1$.

If $u_0$ decays at least as fast as the fundamental solution, $u_0\in L^1(\mathbb{R}^N)\cap\mathcal{D}_\beta$ with $\beta\ge N+2s$, the solution $u$ converges towards $MZ$  uniformly in relative error  in regions of the form 
$$
\{|x|\ge \nu t^{\frac\a{2s}}\},\quad\nu>0.
$$ 
When $s=1$, the best available results do not go so far:  even for  compactly supported initial data, which obviously decay faster than the fundamental solution,    the aforementioned convergence is only known to hold in sets of the form
$$ 
\{\nu t^{\a/2}\le |x|\le\mu t(\log t)^{\frac{2-\alpha}\alpha}\},\qquad \nu>0,\quad \mu\in\Big(0,\Big(\frac{\a(2-\alpha)}{4R\sigma}\Big)^{\frac{2-\alpha}\alpha}\Big), 
$$
where $R$ is the radius of a ball containing the support of $u_0$; see~\cite{CQW}.

If $u_0\in L^1(\mathbb{R}^N)\cap\mathcal{D}_\beta$, $\beta\in (N,N+2s)$, which means that it may decay more slowly than the fundamental solution, but yet in an integrable way,  uniform convergence in relative error towards $MZ$ can only be guaranteed in sets of the form
$$
\nu t^{\frac\a{2s}}\le |x|\le h(t),\qquad \nu>0,\quad h(t)=o\big(t^{\frac\a{N+2s-\beta}}).
$$
The upper restriction is not technical.  Indeed, if $|x|^\beta u_0(x)\to A>0$ as $|x|\to\infty$ for some  $\beta\in(N,N+2s)$, then $u$ converges towards $u_0$ as $t\to\infty$ uniformly in relative error in sets of the form $\{|x|\ge h(t)\}$ if  $h(t)/t^{\frac\a{N+2s-\beta}}\to\infty$ as $t\to\infty$. Let us remark that $|x|^{\beta}$ and $Z(x,t)$ are of the same order precisely if $|x|\asymp t^{\frac\a{N+2s-\beta}}$.

Let us note that  solutions decay in fast scales faster than in the characteristic scale, no matter the value of $p$ or the initial datum. Moreover, the decay rate in intermediate scales is intermediate between the one in compact sets and the one in the characteristic scale. It is easy to check that for any  given $p\in [1,\infty)$ the slowest decay rate is the one in the characteristic scale, $t^{-\frac{\alpha N}{2s}(1-\frac1p)}$, for dimensions $N<2sp/(p-1)$, and the one in compact sets, $t^{-\alpha}$,  if $N>2sp/(p-1)$. Therefore,  the critical dimension phenomenon holds for any value of $p\in[1,\infty)$: the decay rate increases with the dimension as long as $N<2sp/(p-1)$, after which it stays equal to $t^{-\alpha}$. For $p=\infty$ the threshold value is $2s$.

Let us finally point out that some nonlinear versions of~\eqref{eq:main} have been recently considered in~\cite{Allen-Caffarelli-Vasseur-2016,Allen-Caffarelli-Vasseur-2017}. An analysis of the large-time behavior for such models is a challenging problem that would require completely new tools.

\section{Slow scales}\label{sect-RN}
\setcounter{equation}{0}

In this section we consider the large-time behavior in characteristic and intermediate scales, and also in compact sets. Since the proofs are quite similar to the corresponding  ones for the case $s=1$,  we omit most of them, indicating its counterpart in~\cite{CQW}, and write down only those where some difference appears. 

\subsection{Characteristic scale}
We consider first the large-time behavior in $L^p(\mathbb{R}^N)$ when $p$ is in the subcritical range. This gives in particular the behavior in the characteristic scale.
\begin{teo}\label{L^p} Let $u_0\in L^1(\R^N)$. If $p$ is subcritical, then
	\begin{equation*}\label{lp}
	t^{\frac{\alpha N}{2s}(1-\frac1p)}\big\|u(\cdot,t)-M Z(\cdot,t)\big\|_{L^p(\R^N)}\to0\quad\mbox{as }t\to\infty.
	\end{equation*}
\end{teo}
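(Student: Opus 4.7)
The plan is to use the convolution representation~\eqref{eq:convolution} together with an approximation argument that reduces the problem to initial data with compact support, and then exploit the self-similar structure of $Z$ to rewrite $u-MZ$, after a parabolic rescaling, as a convolution of $F$ with an approximate identity.

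Fix $\ep>0$ and choose $\varphi\in C_c^\infty(\R^N)$ with $\|u_0-\varphi\|_{L^1(\R^N)}<\ep$. Splitting $u-MZ=(u_\varphi-M_\varphi Z)+(u_{u_0-\varphi}-(M-M_\varphi)Z)$, where $u_\varphi$ is the solution with datum $\varphi$ and $M_\varphi=\int\varphi$, and using Young's inequality together with the identity $\|Z(\cdot,t)\|_{L^p(\R^N)}=t^{-\frac{\a N}{2s}(1-\frac1p)}\|F\|_{L^p(\R^N)}$ (valid in the subcritical range), the second bracket is controlled by $C\ep\,t^{-\frac{\a N}{2s}(1-\frac1p)}$ with $C$ independent of $t$. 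It therefore suffices to prove the statement with $u_0$ replaced by the compactly supported $\varphi$.

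For this, introduce the self-similar variables $\xi=xt^{-\a/(2s)}$ and $\eta=yt^{-\a/(2s)}$; the convolution formula becomes $t^{\a N/(2s)}u_\varphi(\xi t^{\a/(2s)},t)=(F*\tilde\varphi_t)(\xi)$, where $\tilde\varphi_t(\eta):=t^{\a N/(2s)}\varphi(\eta t^{\a/(2s)})$, and $t^{\a N/(2s)}M_\varphi Z(\xi t^{\a/(2s)},t)=M_\varphi F(\xi)$. A change of variable in $x$ then yields
\[
t^{\frac{\a N}{2s}(1-\frac1p)}\|u_\varphi(\cdot,t)-M_\varphi Z(\cdot,t)\|_{L^p(\R^N)}=\|F*\tilde\varphi_t-M_\varphi F\|_{L^p(\R^N)}.
\]
Since $\int\tilde\varphi_t=M_\varphi$ for every $t$, and $\tilde\varphi_t$ is supported in $\{|\eta|\le Rt^{-\a/(2s)}\}$ when $\operatorname{supp}\varphi\subset B_R$, the family $\{M_\varphi^{-1}\tilde\varphi_t\}_{t>0}$ is an approximate identity as $t\to\infty$. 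The standard $L^p$-continuity of convolution with an approximate identity then gives $F*\tilde\varphi_t\to M_\varphi F$ in $L^p(\R^N)$, provided $F\in L^p(\R^N)$.

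The main obstacle is precisely this last assertion. The bounds~\eqref{eq:global.bounds.F}--\eqref{eq:bounds.F.infty} guarantee $F\in L^p(\R^N)$ exactly when $p$ is subcritical, and for the endpoint $p=\infty$ (possible only when $2s>N=1$) $F$ is moreover uniformly continuous, so the convergence in $L^\infty$ also holds. Letting $\ep\to 0$ finishes the proof. Note that this is the step where the extra restriction $p<N/(N-2s+1)$ imposed in~\cite{Kemppainen-Siljander-Zacher-2017} can be dispensed with: their approach relied on the gradient bound~\eqref{eq:estimates.gradient} for $DF$, which is $L^p$-integrable only in that smaller range, whereas the approximate-identity reasoning above needs only the integrability of $F$ itself.
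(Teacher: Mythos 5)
Your proof is correct and rests on exactly the same ingredients the paper invokes when deferring to \cite[Theorem 2.1]{CQW}: the fact that $F\in L^p(\R^N)$ in the subcritical range, continuity of translations in $L^p$ (packaged here as convergence of convolution with an approximate identity), and the Lipschitz bound on $F$ for the endpoint $p=\infty$ when $2s>N=1$. The preliminary reduction to compactly supported data via Young's inequality is just a repackaging of the usual splitting of the integral over $\{|y|\le K\}$ and $\{|y|>K\}$, so this is essentially the paper's argument.
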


\begin{proof} The proof follows the lines of that of  \cite[Theorem 2.1]{CQW}, since $F$ belongs to $L^p$ when $p$ is subcritical, and using also that $F'\in L^\infty(\mathbb{R})$ when dealing with the case $p=\infty$, $2s>N=1$.
\end{proof}
We remark that though the result is valid in the whole $\mathbb{R}^N$, it is only sharp in the characteristic scale. In other scales solutions decay faster for $p$ in this range.

If $p$ is not subcritical, the asymptotic behavior is still given by $MZ$ if we restrict to outer sets and assume some decay for the initial datum. This result is sharp in the characteristic scale.
	
\begin{teo}\label{teo-supercritical exterior} Let $N\ge 2s$ and $p\in [p_c,\infty]$. Assume $u_0\in L^1(\R^N)\cap \mathcal{D}_N$. Then, for any $\nu>0$,
	\[
	t^{\frac{\alpha N}{2s}(1-\frac1p)}\|u(\cdot,t)-M  Z(\cdot,t)\|_{L^p(\{|x|>\nu t^{\frac{\alpha}{2s}}\})}\to 0\quad\mbox{as }t\to\infty.
	\]
\end{teo}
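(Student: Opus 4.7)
The plan is to follow the template used for $s=1$ in~\cite{CQW}. I would start from
$$u(x,t) - MZ(x,t) = \int_{\R^N}\bigl[Z(x-y,t) - Z(x,t)\bigr]\,u_0(y)\,dy,$$
split $u_0 = \phi + \psi$ with $\phi := u_0\chi_{B_R}$, $\psi := u_0\chi_{B_R^c}$, and decompose accordingly $u - MZ = (u_\phi - M_\phi Z) + (u_\psi - M_\psi Z)$, where $M_\phi = \int\phi$ and $M_\psi = M - M_\phi$. Given $\varepsilon > 0$, $R$ is chosen large enough so that $M_\psi < \varepsilon$ and so that the $\mathcal D_N$-decay $|u_0(y)| \le C|y|^{-N}$ holds on $\{|y| > R\}$.

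For the $\phi$-piece, if $t$ is so large that $\nu t^{\alpha/(2s)} > 2R$, then for every $|x| > \nu t^{\alpha/(2s)}$ and every $|y|\le R$ the segment $\{x-\theta y:\theta\in[0,1]\}$ satisfies $|x-\theta y| \ge |x|/2 \ge \nu t^{\alpha/(2s)}/2$. Combining the mean value theorem with the self-similar form of $Z$ and the gradient bounds~\eqref{eq:estimates.gradient} (which easily yield $|DF(\xi)| \le C_\nu (1+|\xi|)^{-(N+2s+1)}$ for $|\xi| \ge \nu/2$, by using the singular bound on $\{\nu/2\le|\xi|\le 1\}$ and the outer bound on $\{|\xi|\ge 1\}$) I obtain
$$\bigl|u_\phi(x,t) - M_\phi Z(x,t)\bigr| \le C_\nu R M \, t^{\alpha} \bigl(t^{\alpha/(2s)} + |x|\bigr)^{-(N+2s+1)}.$$
Taking the $L^p(\{|x|>\nu t^{\alpha/(2s)}\})$ norm by the substitution $\xi = x t^{-\alpha/(2s)}$ and multiplying by $t^{\alpha N(1-1/p)/(2s)}$ yields a quantity of order $C_\nu R M\, t^{-\alpha/(2s)}$, which vanishes as $t \to \infty$ for each fixed $R$.

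For the $\psi$-piece, I would use
$$\|u_\psi - M_\psi Z\|_{L^p(\{|x|>\nu t^{\alpha/(2s)}\})} \le \|u_\psi\|_{L^p(\{|x|>\nu t^{\alpha/(2s)}\})} + M_\psi\,\|Z(\cdot,t)\|_{L^p(\{|x|>\nu t^{\alpha/(2s)}\})}.$$
The second summand equals $M_\psi t^{-\alpha N(1-1/p)/(2s)}\|F\|_{L^p(\{|\xi|>\nu\})} \le C_\nu\varepsilon\, t^{-\alpha N(1-1/p)/(2s)}$, by self-similarity and because $F \in L^p(\{|\xi|>\nu\})$ thanks to~\eqref{eq:bounds.F.infty}. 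For the first, I split the integration in $u_\psi(x,t) = \int_{|y|>R}Z(x-y,t)u_0(y)\,dy$ at $|y| = |x|/2$: on $\{R<|y|<|x|/2\}$ one has $|x-y|\ge |x|/2\ge \nu t^{\alpha/(2s)}/2$, so~\eqref{eq:bounds.F.infty} gives $Z(x-y,t) \le C_\nu t^{\alpha}|x|^{-(N+2s)}$ and the integral is controlled by $C_\nu M_\psi t^{\alpha}|x|^{-(N+2s)}$, whose scaled $L^p$-norm is bounded by $C_\nu\varepsilon$; on $\{|y|\ge|x|/2\}$ the $\mathcal D_N$-bound $|u_0(y)|\le C|x|^{-N}$ together with $\int Z(x-\cdot,t) \le 1$ and a further use of the fat-tail bound~\eqref{eq:bounds.F.infty} for $F$ allow one to absorb the contribution into a bound of size $C_\nu\varepsilon\, t^{-\alpha N(1-1/p)/(2s)}$. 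Combining everything and letting $\varepsilon \downarrow 0$ completes the argument.

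I expect the main technical difficulty to lie precisely in the second sub-case of the $\psi$-piece: the crude pointwise estimate $|u_\psi(x,t)| \lesssim |x|^{-N}$ on $\{|y|\ge|x|/2\}$ produces an $L^p$-norm exactly of the critical order $t^{-\alpha N(1-1/p)/(2s)}$, so the extra smallness cannot come from sup-norm estimates alone and must be extracted by combining the $L^1$-tail smallness $M_\psi < \varepsilon$ with the concentration of $Z(\cdot,t)$ in $\{|w|\lesssim t^{\alpha/(2s)}\}$ and its fat-tail decay~\eqref{eq:bounds.F.infty} elsewhere.
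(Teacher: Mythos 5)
Your overall architecture matches the paper's: the same starting identity, a near/far splitting of the $y$--integration, the Mean Value Theorem plus the gradient bounds~\eqref{eq:estimates.gradient} for the inner piece, and the $L^1$--smallness of the tail of $u_0$ for one of the outer pieces. Those parts of your argument are correct (the paper uses a time-dependent cutoff $|y|\lessgtr\delta|x|$ rather than your fixed $B_R$, but that is immaterial). The genuine gap is exactly where you yourself locate it: the contribution of $\{|y|\ge |x|/2\}$, i.e.\ the region containing the diagonal $y\approx x$. There the crude bound $u_0(y)\le C|x|^{-N}$ together with $\int Z(x-\cdot,t)\le 1$ gives precisely the critical order with no smallness, and the mechanism you propose to recover smallness --- \lq\lq $L^1$-tail smallness of $u_0$ plus concentration of $Z$ plus its fat-tail decay'' --- does not work near the diagonal: for $2s\le N$ the kernel $Z(x-\cdot,t)$ is \emph{unbounded} at $y=x$, so you cannot trade $\int_{|y|>R}u_0<\varepsilon$ against a sup-bound on $Z$, and the fat-tail bound~\eqref{eq:bounds.F.infty} says nothing about $|x-y|\ll t^{\alpha/(2s)}$.

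What actually closes this step in the paper is a second, carefully tuned splitting of $\{|y|>\delta|x|\}$ at $|x-y|\asymp\gamma\,|xt^{-\alpha/(2s)}|^{\ep}\,t^{\alpha/(2s)}$ with $\ep<1$. On the near-diagonal part one uses the \emph{pointwise} decay $u_0(y)\le C|y|^{-N}$ together with the integrable singularity of the profile, $F(\zeta)\le C|\zeta|^{2s-N}$ from~\eqref{eq:global.bounds.F}, so that $\int_{|x-y|<\rho}Z(x-y,t)\,dy\le C(\rho t^{-\alpha/(2s)})^{2s}$; this produces a term of size $\gamma^{2s}|\xi|^{-(N-2s\ep)}$ whose $L^p(\{|\xi|>\nu\})$ norm is finite precisely because $p\ge p_c$ forces $p(N-2s\ep)>N$ for $\ep<1$ --- this is the only place where the hypothesis $p\ge p_c$ enters, and your sketch never uses it. On the far-from-diagonal part one needs not a sup-bound but the quantitative decay $F(\xi-\eta)\le C\gamma^{-(N+2s)}|\xi|^{-(N+2s)\ep}$ with $(N+2s)\ep p>N$, so that the $L^p$ norm over the unbounded exterior set converges, multiplied by $\int_{|y|>\delta\nu t^{\alpha/(2s)}}u_0\to0$. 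Without some version of this two-parameter argument ($\gamma$ and $\ep$, with $\gamma$ tied to $\delta$) your proof does not close. You should also note that the case $2s=N=1$ (where $p_c=\infty$ and $F$ has a logarithmic singularity, so $E_{1,1/2}$ replaces the power bound) and the passage to $p=\infty$ require separate treatment, as in the paper.
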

\begin{proof} We follow the ideas in the proof of~\cite[Theorem 3.1]{CQW}, though we will have to make some changes, due to the differences in the fundamental solutions.  The starting point is the identity
\begin{equation}
\label{eq:starting.point}
t^{\frac{\alpha N}{2s}}\big(u(x,t)-M  Z(x,t)\big)
=\int u_0(y)\big(F((x-y)t^{-\frac{\a}{2s}})-F(xt^{-\frac{\a}{2s}})\big)\,dy.
\end{equation}
Then, for $N>2s$ and $p\in [p_c,\infty)$ we get
\[\begin{aligned}
t^{\frac{\alpha N}{2s}(1-\frac1p)}\big\|u(\cdot,t)&-M  Z(\cdot,t)\big\|_{L^p(\{|x|>\nu t^{\frac\alpha{2s}}\})}
\le {\rm I}+{\rm II}+{\rm III},\quad\text{where }\\
{\rm I}&=t^{-\frac{\alpha N}{2sp}}\int u_0(y)\Big(\int_{\scriptsize\!\begin{array}{c}|x|> \nu t^{\frac\alpha{2s}}\\ |y|<\delta|x|\end{array}}\big|F((x-y)t^{-\frac\a{2s}})-F(xt^{-\frac\a{2s}})\big|^p\,dx\Big)^{1/p}\,dy,\\
{\rm II}&=t^{-\frac{\alpha N}{2sp}}
\left(\int_{|x|> \nu t^{\frac\alpha{2s}}}\Big(\int_{|y|>\delta|x|} u_0(y)F(xt^{-\frac\a{2s}})\,dy\Big)^p\,dx\right)^{1/p},\\
{\rm III}&=t^{-\frac{\alpha N}{2sp}} \left(\int_{|x|> \nu t^{\frac\alpha{2s}}}\Big(\int_{|y|>\delta|x|} u_0(y)F((x-y)t^{-\frac\a{2s}})\,dy\Big)^p\,dx\right)^{1/p}.
\end{aligned}
\]
Performing the change of variables $\xi=x t^{-\frac\alpha{2s}}$ we get
$$
{\rm I}=
\int u_0(y)\Big(\int_{\scriptsize\!\!\!\begin{array}{c}|\xi|>\nu\\ |y|t^{-\frac\alpha{2s}}<\delta|\xi|\end{array}}\big|F(\xi-
yt^{-\frac\alpha{2s}})-F(\xi)\big|^p\,d\xi\Big)^{1/p}\,dy.
$$
Because of the Mean Value Theorem, 
$\big|F(\xi-
yt^{-\frac\alpha{2s}})-F(\xi)\big|=\big|DF(\xi-\theta yt^{-\frac\a{2s}})\cdot yt^{-\frac\a{2s}}\big|$ for some $\theta\in(0,1)$. Let $\delta\in(0,1/2)$. If  $|\xi|\ge \nu $ and $|y|t^{-\frac\alpha{2s}}\le\delta|\xi|$, then  $|\xi-\theta yt^{-\frac\a{2s}}|\ge |\xi|/2\ge\nu/2$ for all $\theta\in(0,1)$. Therefore, using the estimates for $|DF|$ in \eqref{eq:estimates.gradient}, 
$$
\big|F(\xi-
yt^{-\frac\alpha{2s}})-F(\xi)\big|\le \frac{C_\nu|y|t^{-\frac\alpha{2s}}}{|\xi-\theta yt^{-\a/2}|^{N+2s+1}}\le  \frac{C_\nu 2^{N+2s+1}\delta}{|\xi|^{N+2s}}.
$$
Therefore, 
\[
{\rm I}\le C M\delta\Big(\int_{|\xi|>\nu}\frac{d\xi}{|\xi|^{N+2s}}\Big)^{1/p}\le CM \delta.
\]

On the other hand,
\[
{\rm II}\le  \left(\int_{|\xi|>\nu}|F(\xi)|^p\,d\xi\right)^{1/p}\int_{|y|>\delta \nu t^{\frac\a{2s}}}u_0(y)\,dy =C\int_{|y|>\delta\nu t^{\frac\a{2s}}}u_0(y)\,dy.
\]

As for ${\rm III}$, we take $\ep\in(0,1)$ and $\gamma> 0$ to be chosen later and make also the change of variables $\eta=yt^{-\frac\alpha{2s}}$. Since $u_0\in\mathcal{D}_N$, using the estimates~\eqref{eq:global.bounds.F}--\eqref{eq:bounds.F.infty} for the profile $F$ we get
\[\begin{aligned}
{\rm III}
=
&\left(\int_{|\xi|> \nu}\Big(t^{\frac{\alpha N}{2s}}\int_{|\eta|>\delta|\xi|} u_0(\eta t^{\frac\a{2s}})F(\xi-\eta)\,d\eta\Big)^p\,d\xi\right)^{1/p}
\le {\rm III}_{1}+{\rm III}_{2},\quad\text{where }\\
&{\rm III}_{1}= C\left(\int_{|\xi|\ge \nu}\Big(\int_{\scriptsize\!\!\begin{array}{c}|\eta|>\delta|\xi|\\
	|\xi-\eta|<\gamma|\xi|^\ep\end{array}}|\xi-\eta|^{2s-N}|\eta|^{-N}\,d\eta\Big)^p\,d\xi\right)^{1/p},\\
&{\rm III}_{2}= C\left(\int_{|\xi|\ge \nu}\Big(t^{\frac{\alpha N}{2s}}\int_{\scriptsize\!\!\begin{array}{c}|\eta|>\delta|\xi|\\
	|\xi-\eta|>\gamma|\xi|^\ep\end{array}}u_0(\eta t^{\frac\a{2s}})|\xi-\eta|^{-(N+2s)}\,d\eta\Big)^p\,d\xi\right)^{1/p}.
\end{aligned}\]
Since $p\ge \frac N{N-2s}$, if we choose  $\gamma=\delta^{\frac{N+1}{2s}}$ we have
\[\begin{aligned}
{\rm III}_{1}&\le C \delta^{-N}\left(\int_{|\xi|\ge \nu}|
\xi|^{-Np}\Big(\int_{|\xi-\eta|<\gamma|\xi|^\ep}|\xi-\eta|^{2s-N}\,d\eta\Big)^p\,d\xi\right)^{1/p}\\
&\le C \delta^{-N}\gamma^{2s}\left(\int_{|\xi|\ge \nu}|\xi|^{-p(N-2s\ep)}\,d\xi\right)^{1/p}\le C \delta.
\end{aligned}\]

On the other hand, choosing $\ep<1$ but close enough to 1 so that $(N+2s)\ep p>N$, we get
\[\begin{aligned}
{\rm III}_{2}&\le C\gamma^{-(N+2s)}\left(\int_{|\xi|\ge \nu}|\xi|^{-(N+2s)p\varepsilon}\Big(t^{\frac{N\a}{2s}}\int_{|\eta|>\delta|\xi|}u_0(\eta
t^{\frac\a{2s}})\,d\eta\Big)^p\,d\xi\right)^{1/p}\\
&\le C\gamma^{-(N+2s)}\Big(\int_{|\xi|\ge \nu}|\xi|^{-(N+2s)p\varepsilon}\,d\xi\Big)^{1/p}\int_{|y|>\delta\nu t^{\frac\a{2s}}}u_0(y)\,dy\to 0\quad \text{as }t\to\infty,
\end{aligned}
\]
which completes the proof for $N>2s$ and $p$ finite.

The cases $N>2s$, $p=\infty$ follow by letting $p\to\infty$ in the estimates for finite $p$.

In order to deal with the case $N=2s=1$, $p=\infty$, starting from~\eqref{eq:starting.point} we get
\[
\begin{aligned}
t^{\alpha}\big|u(x,t)&-M  Z(x,t)\big|\le {\rm I}+{\rm II}+{\rm III},\quad\text{where }\\
{\rm I}&=
\int_{|y|<\delta t^{\a}}u_0(y)|F((x-y)t^{-\a})-F(xt^{-\a})|\,dy,
\\
{\rm II}&=\int_{|y|>\delta t^{\a}}u_0(y)F(xt^{-\a})\,dy,
\\
{\rm III}&=\int_{|y|>\delta t^{\a}}u_0(y)F((x-y)t^{-\a})\,dy.
\end{aligned}
\]

The Mean Value Theorem implies that there exists $\theta\in(0,1)$ such that
\[
|F((x-y)t^{-\a})-F(xt^{-\a})|=|F'(xt^{-\alpha}-\theta yt^{-\alpha})| \,|y|t^{-\alpha}.
\]
If  $|x|\ge \nu t^{\alpha}$ and $|y|\le\delta t^{\alpha}$, with $\delta<\nu/2$ to be chosen, then $|xt^{-\alpha}-\theta yt^{-\alpha}|\ge \nu/2$ for all  $\theta\in(0,1)$. 
Therefore, since  $F'$ is bounded outside $B_{\nu/2}$, 
$$
\textrm{I}\le M\delta\|F'\|_{L^\infty(\mathbb{R}\setminus B_{\nu/2})}<M\varepsilon
$$
if $\delta<\min\{\nu/2, \ep/\|F'\|_{L^\infty(\mathbb{R}\setminus B_{\nu/2})}\}$.

We now bound $\textrm{II}$. As $|x|t^{-\alpha}\ge \nu$ and $F$ is bounded outside $B_\nu$,
\[
\textrm{II}\le \displaystyle
\displaystyle \|F\|_{L^\infty(\mathbb{R}\setminus B_\nu)}
\int_{|y|>\delta t^{\a}}u_0(y)\,dy <\ep
\quad\text{for }t\ge t_0(\ep,\delta,\nu).
\]

Now we turn to $\textrm{III}$. We have $\textrm{III}=\textrm{III}_{1}+\textrm{III}_{2}$, where
\[
\begin{aligned}
\textrm{III}_{1}&=\int_{\scriptsize\!\!\!\!\begin{array}{c}|y|>\delta t^{\alpha}\\|x-y|<\gamma t^{\alpha}\end{array}}|y|u_0(y)\,\frac{F\big((x-y)t^{-\alpha}\big)}{|y|}\,dy,\\
\textrm{III}_{2}&=\int_{\scriptsize\!\!\!\!\begin{array}{c}|y|>\delta t^{\alpha}\\|x-y|>\gamma t^{\alpha}\end{array}}u_0(y)F\big((x-y)t^{-\alpha}\big)\,dy.
\end{aligned}
\]
Since $u_0\in\mathcal{D}_1$, using  the bound $F(\xi)\le -C\log|\xi|$ for $|\xi|\le 1/2$, we get
\[
\textrm{III}_{1}\le -  C\delta^{-1}t^{-\alpha}\int_0^{\gamma t^{\alpha}}\log (rt^{-\alpha})\,dr= -  C\delta^{-1}\int_0^{\gamma}\log \rho\,d\rho \le  C\delta^{-1}\gamma|\log \gamma|
\]
if $\gamma\le 1/2$. Hence, choosing $\gamma=\delta^{2}$ with $\delta\in(0,1/\sqrt2)$ small enough, then
$\textrm{III}_{1}\le C\delta|\log\delta|<\varepsilon$.

On the other  hand,
\[
\textrm{III}_{2}\le \|F\|_{L^\infty(\mathbb{R}\setminus B_{\delta^2})}\int_{|y|>\delta t^{\a}}u_0(y)\,dy<\ep\quad\mbox{if } t\ge t_0(\ep,\delta).
\]
\end{proof}

\subsection{Intermediate scales}
In  intermediate scales, $|x|\asymp g(t)$ with $g$ satifying~\eqref{eq:intermediate.scales}, 
solutions still approach $MZ$, as in the characteristic scale, though with a different rate, that depends on the size of $2s$ when compared to $N$. Since in these scales $\xi=xt^{-\frac\a{2s}}\to0$ as $t\to\infty$, the results can be rewritten  using the behavior of the profile $F$ at the origin, which also depends on the size of $2s$. Thus, solutions will approach a multiple of $E_{N,s}$ if $2s<N$, and a constant if $2s\ge N=1$, which one depending on the precise scale when we have equality. 

\begin{teo}\label{teo-intermediate-supercritical} Let $\kappa>0$ as in~\eqref{eq:constant.Nge2s}. Let $u_0\in L^1(\mathbb{R}^N)$ if $p$ is subcritical, or $u_0\in L^1(\mathbb{R}^N)\cap \mathcal{D}_N$ if $p\in[p_c,\infty]$. For any $g$ satisfying~\eqref{eq:intermediate.scales} and $0<\nu<\mu<\infty$:
	
\noindent {\rm (a)} If $2s<N$, then     	
\begin{equation*}
\label{eq-intermedio-p-1}
\begin{array}{ll}
\displaystyle
t^{\alpha} (g(t))^{N\big(1-\frac 1p\big)-2s}\|u(\cdot,t)-M
Z(\cdot,t)\|_{L^p(\{\nu<|x|/g(t)<\mu\})}\to0\quad&\mbox{as }t\to\infty,
\\[8pt]
\displaystyle(g(t))^{N\big(1-\frac 1p\big)-2s}\|t^{\alpha} u(\cdot,t)-M\kappa E_{N,s}
	\|_{L^p(\{\nu <|x|/g(t)<\mu\})}\to0\quad&\mbox{as } t\to\infty.	
\end{array}
\end{equation*}

\noindent{\rm (b)} If $2s=N=1$, then
\begin{equation*}
\label{eq-intermedio1 N=2}
\begin{array}{ll}
\displaystyle \frac{t^{\a}(g(t))^{-
		\frac1p}}{|\log (g(t)t^{-\alpha})|}
\|{u(\cdot,t)-M  Z(\cdot,t)}\|_{L^p(\{\nu <|x|/g(t)<\mu\})}\to0\quad&\mbox{as }t\to\infty,\\[8pt]
\displaystyle (g(t))^{-\frac 1p}\Big\|\frac{t^{\a}}{|\log(g(t)t^{-\alpha})|}u(\cdot,t)-M  \kappa\Big\|_{L^p(\{\nu <|x|/g(t)<\mu\})}\to0\quad&\mbox{as } t\to\infty.
\end{array}
\end{equation*}

\noindent{\rm (c)} If $2s>N=1$, then
$$
\begin{array}{ll}
\displaystyle t^{\frac\alpha{2s}}(g(t))^{-\frac 1p}\|u(\cdot,t)-M Z(\cdot,t)\|_{L^p(\R^N)}\to0\quad&\mbox{as }t\to\infty,\\[8pt]
\displaystyle(g(t))^{-\frac 1p}\|t^{\frac\alpha{2s}} u(\cdot,t)-MF(0)
\|_{L^p(\{\nu <|x|/g(t)<\mu\})}\to0\quad&\mbox{as } t\to\infty.
\end{array}
$$
\end{teo}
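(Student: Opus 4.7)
The plan is to follow the same strategy as in Theorem~\ref{teo-supercritical exterior}, but now exploit the fact that in intermediate scales the rescaled variable $\xi := xt^{-\alpha/(2s)}$ tends to zero on $\{\nu<|x|/g(t)<\mu\}$, so we may use the asymptotic behaviour of $F$ at the origin from~\eqref{eq:constant.Nge2s}. The starting point for parts (a) and (b) is the identity~\eqref{eq:starting.point}. One splits the $y$-integration as $|y|<R$ and $|y|>R$ for a large fixed $R$, eventually sent to infinity. On the near region, since $y t^{-\alpha/(2s)}$ is much smaller than $\xi$ for $t$ large, the Mean Value Theorem combined with the gradient bound $|DF(\zeta)|\le C|\zeta|^{2s-1-N}$ from~\eqref{eq:estimates.gradient} yields
$$
\bigl|F(\xi - y t^{-\alpha/(2s)})-F(\xi)\bigr|\le C R\, t^{-\alpha/(2s)}|\xi|^{2s-1-N}
$$
as soon as $R t^{-\alpha/(2s)}\le |\xi|/2$. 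Taking the $L^p$ norm on $\{\nu g(t)<|x|<\mu g(t)\}$ and multiplying by the target normalization, the contribution of this part is $O(R/g(t))$, which tends to zero.

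On the far region $|y|>R$, the integrand is bounded using the global estimates~\eqref{eq:global.bounds.F}--\eqref{eq:bounds.F.infty} on $F$, and the resulting integrals are split and estimated exactly as the terms ${\rm I}$, ${\rm II}$, ${\rm III}$ in the proof of Theorem~\ref{teo-supercritical exterior}, with an additional subsplitting according to whether $|\xi - y t^{-\alpha/(2s)}|$ is smaller or larger than $\gamma|\xi|^{\varepsilon}$ in order to control the singular behaviour of $F$ near zero, and using the decay hypothesis $u_0\in\mathcal{D}_N$ precisely when $p$ is supercritical. For part (b) the analysis is analogous, with $|\zeta|^{2s-N}$ replaced by $-\log|\zeta|$ in the small-$\zeta$ bound for $F$; this produces the logarithmic factor appearing in the normalization.

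The second statement in each of (a), (b), (c) then follows from the first by the triangle inequality together with the pointwise asymptotics of $F$ at the origin from~\eqref{eq:constant.Nge2s}. Indeed, $\xi=xt^{-\alpha/(2s)}$ ranges over $\{\nu g(t)t^{-\alpha/(2s)}<|\xi|<\mu g(t) t^{-\alpha/(2s)}\}$, a set that shrinks to $\{0\}$, so $F(\xi)/E_{N,s}(\xi)\to \kappa$ uniformly in case (a), and analogously in (b) and (c). Undoing the scaling yields $t^\alpha MZ(x,t)\to M\kappa E_{N,s}(x)$ in (a), $t^\alpha MZ(x,t)/|\log(g(t)t^{-\alpha})|\to M\kappa$ in (b), and $t^{\alpha/(2s)} MZ(x,t)\to MF(0)$ in (c), each uniformly on the intermediate annulus. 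The first statement in (c), where the $L^p$ norm is taken over all of $\mathbb{R}^N$, can be obtained by combining the $p=\infty$ decay $t^{\alpha/(2s)}\|u-MZ\|_{L^\infty(\mathbb{R})}\to 0$ furnished by Theorem~\ref{L^p} (every $p$ is subcritical when $2s>N=1$, and $F'\in L^\infty$) with the refined bound $\|u-MZ\|_{L^1(\mathbb{R})}=o(g(t)t^{-\alpha/(2s)})$, proved by a similar near/far splitting, now exploiting the global boundedness of $F$ and $DF$ together with the tail bound $F(\xi)\le C|\xi|^{-(N+2s)}$; intermediate values of $p$ follow by interpolation.

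The principal obstacle is twofold: controlling the near-part contribution in cases (a) and (b) when $|\xi|$ is small (so that $|DF|$ is large), and simultaneously managing the singular behaviour of $F$ at the origin in the far-part integrals. The near/far splitting in $|y|$, combined with the precise gradient bound $|DF(\zeta)|\le C|\zeta|^{2s-1-N}$ and the subsplitting used in Theorem~\ref{teo-supercritical exterior}, is what resolves these issues; the refined profile forms in the second statements are then immediate consequences of~\eqref{eq:constant.Nge2s}.
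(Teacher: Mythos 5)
Your treatment of parts (a) and (b) --- the near/far splitting in $y$, the Mean Value Theorem with the gradient bound $|DF(\zeta)|\le C|\zeta|^{2s-1-N}$ on the near region, the subsplitting of the far region as in Theorem~\ref{teo-supercritical exterior}, and the passage to the second displays via the uniform convergence $F(\xi)/E_{N,s}(\xi)\to\kappa$ (resp. $F(\xi)/|\log|\xi||\to\kappa$, $F(\xi)\to F(0)$) on the shrinking annulus in $\xi$ --- is in substance the route the paper takes: it defers (a) and (b) to the proofs of Theorems 4.1 and 4.2 of \cite{CQW}, which follow this scheme, and your identification of the limiting profiles in the second displays is correct. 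The same applies to the second display of (c).

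There is, however, a genuine gap in your argument for the first display of (c). The refined bound $\|u(\cdot,t)-MZ(\cdot,t)\|_{L^1(\R)}=o\big(g(t)t^{-\frac{\a}{2s}}\big)$ that you feed into the interpolation is false for general $u_0\in L^1(\R)$: the usual splitting only gives $\|u-MZ\|_{L^1}\le Ct^{-\frac{\a}{2s}}\int_{|y|<K}|y|u_0(y)\,dy+C\int_{|y|>K}u_0(y)\,dy$, and the tail integral vanishes at an arbitrarily slow rate. Concretely, if $u_0(y)\asymp |y|^{-1-\ep}$ at infinity with $\ep$ small, then $u(x,t)\gtrsim |x|^{-1-\ep}\gg MZ(x,t)$ for $|x|\gtrsim t^{\frac{\a}{2s-\ep}}$, whence $\|u-MZ\|_{L^1(\R)}\gtrsim t^{-\frac{\a\ep}{2s-\ep}}$, which is not $o\big(g(t)t^{-\frac{\a}{2s}}\big)$ for slowly growing $g$. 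The way out is that the domain in the first display of (c) must be read as the annulus $\{\nu<|x|/g(t)<\mu\}$, consistently with the remaining displays of the theorem (over all of $\R$ with $p=1$ the statement would require precisely the false rate above). On the annulus the claim is immediate from the $p=\infty$ case of Theorem~\ref{L^p}: $\|u-MZ\|_{L^p(\{\nu<|x|/g(t)<\mu\})}\le |\{\nu g(t)<|x|<\mu g(t)\}|^{1/p}\,\|u-MZ\|_{L^\infty(\R)}=o\big((g(t))^{1/p}t^{-\frac{\a}{2s}}\big)$, which is what the paper means when it says that case (c) follows immediately from Theorem~\ref{L^p}; no separate $L^1$ estimate or interpolation is needed.
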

\begin{proof} 
The proofs of (a) and (b) are similar respectively to those of~\cite[Theorem 4.1]{CQW} and~\cite[Theorem 4.2]{CQW}. Case (c)~follows immediately from Theorem~\ref{L^p}.
\end{proof}

\subsection{Compact sets} As in the case $s=1$, $N>2$,  when $2s<N$ we have a striking result on compact sets: the decay rate is $t^{-\alpha}$, independently of the values of $s$ and $N$, and the final profile is a multiple of $\Phi$, the Newtonian potential of the initial datum. 
\begin{teo}\label{teo-compacto Lp subcritico}
Let $2s<N$ and  $\kappa>0$ as in~\eqref{eq:constant.Nge2s}. 

\noindent {\rm (a)} Let $p\in[1,\infty]$, $p\neq p_c$. Let $u_0\in L^1(\R^N)$ if $p$ is subcritical or $L^1(\R^N)\cap L^{p}_{\rm loc}(\R^N)$ otherwise. Then $\Phi\in L^p_{\rm loc}(\mathbb{R}^N)$ and
$\displaystyle\|t^\a u(\cdot,t)-\kappa\Phi\|_{L^p(B_\mu)}\to0$ as $t\to\infty$.

\noindent{\rm (b)} Let $g$ satisfy~\eqref{eq:intermediate.scales}. If  $u_0\in L^1(\R^N)$, then 
$\Phi\in M^{p_c}(\R^N)$ and 
$\|t^\a u(\cdot,t)-\kappa\Phi\|_{M^{p_c}(B_{g(t)})}\to0$ as $t\to\infty$.
If moreover  $u_0\in L^1(\R^N)\cap L^{p_c}_{\rm loc}(\R^N)$, then 	$\Phi\in L^{p_c}_{\rm loc}(\mathbb{R}^N)$ and 	
$\|t^\a u(\cdot,t)-\kappa\Phi\|_{L^{p_c}(B_{g(t)})}\to0$ as $t\to \infty$.
\end{teo}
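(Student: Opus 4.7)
The starting point is the self-similar structure $Z(x,t)=t^{-\alpha N/(2s)}F(xt^{-\alpha/(2s)})$ together with~\eqref{eq:constant.Nge2s}. Setting $g(\eta):=|\eta|^{N-2s}F(\eta)$, estimates \eqref{eq:global.bounds.F}--\eqref{eq:bounds.F.infty} give $g\in L^\infty(\R^N)$, and \eqref{eq:constant.Nge2s} gives $g(\eta)\to\kappa$ as $\eta\to 0$. The identity $t^\alpha Z(z,t)=|z|^{2s-N}g(zt^{-\alpha/(2s)})$ produces the master representation
$$
t^\alpha u(x,t)-\kappa\Phi(x)=\int_{\R^N}u_0(y)\,|x-y|^{2s-N}\bigl[g\bigl((x-y)t^{-\alpha/(2s)}\bigr)-\kappa\bigr]\,dy.
$$
That $\Phi$ belongs to the asserted spaces I would verify by splitting $u_0=u_0\mathbf 1_{B_R}+u_0\mathbf 1_{B_R^c}$ with $R$ large compared with $\mu$ (or $g(t)$): the tail gives a bounded function on the ball in question, since $|x-y|\gtrsim R$ there, while the compactly supported piece is handled by (local) Young's inequality, using that $|\cdot|^{2s-N}\in L^p_{\rm loc}$ when $p<p_c$ and in $L^1_{\rm loc}$ always. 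In the critical case $p=p_c$ with only $u_0\in L^1$, the weak Young inequality $L^1*M^{p_c}\hookrightarrow M^{p_c}$ yields $\Phi\in M^{p_c}$; the extra hypothesis $u_0\in L^{p_c}_{\rm loc}$ upgrades this to $\Phi\in L^{p_c}_{\rm loc}$.

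For a.e.~$x$ (those with $\Phi(x)<\infty$) and each $y\neq x$, the argument $(x-y)t^{-\alpha/(2s)}\to 0$, so the bracket in the master identity vanishes pointwise; the integrand is dominated by $(\|g\|_\infty+\kappa)u_0(y)|x-y|^{2s-N}$, integrable in $y$ with integral proportional to $\Phi(x)$. Dominated convergence thus yields the pointwise limit $t^\alpha u(x,t)\to\kappa\Phi(x)$ a.e. For case~(a), the same bound gives $|t^\alpha u(\cdot,t)-\kappa\Phi|\le C\Phi\in L^p(B_\mu)$, so dominated convergence in $L^p(B_\mu)$ furnishes the desired norm convergence.

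For case~(b), I would carry out a near-far split of the master integral at $|x-y|=\delta t^{\alpha/(2s)}$. The near part is pointwise bounded by $\omega(\delta)\Phi(x)$ with $\omega(\delta):=\sup_{|\eta|<\delta}|g(\eta)-\kappa|\to 0$, while the far part, by the uniform bound $|x-y|^{2s-N}\le(\delta t^{\alpha/(2s)})^{2s-N}$ and $\|u_0\|_{L^1}=M$, is at most $CM\delta^{2s-N}t^{-\alpha(N-2s)/(2s)}$ uniformly in $x$. Since $N/p_c=N-2s$ and hence $|B_{g(t)}|^{1/p_c}\asymp g(t)^{N-2s}$, the far part contributes at most $C\delta^{2s-N}(g(t)t^{-\alpha/(2s)})^{N-2s}\to 0$ to either the $M^{p_c}(B_{g(t)})$ or the $L^{p_c}(B_{g(t)})$ norm, by \eqref{eq:intermediate.scales}. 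The $M^{p_c}$-norm of the near part is at most $\omega(\delta)\|\Phi\|_{M^{p_c}(\R^N)}$, and sending $\delta\to 0$ after $t\to\infty$ settles the $M^{p_c}$ statement. The main obstacle lies in the $L^{p_c}$ statement, where the near-part bound becomes $\omega(\delta)\|\Phi\|_{L^{p_c}(B_{g(t)})}$ and this norm really does grow with $g(t)$ (though only logarithmically, because $\Phi(x)\lesssim|x|^{2s-N}$ and $p_c(2s-N)=-N$). I would handle it by a diagonal choice $\delta=\delta(t)\to 0$ with $\delta(t)\gg g(t)t^{-\alpha/(2s)}$, so the far part still vanishes, yet slowly enough that $\omega(\delta(t))\|\Phi\|_{L^{p_c}(B_{g(t)})}\to 0$ remains possible; the slack provided by $g(t)=o(t^{\alpha/(2s)})$ is precisely what makes this double balancing feasible.
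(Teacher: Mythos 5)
Your overall strategy --- rewriting $t^\a Z(z,t)=|z|^{2s-N}g(zt^{-\a/(2s)})$ with $g(\eta)=|\eta|^{N-2s}F(\eta)\to\kappa$, establishing the membership of $\Phi$ in the asserted spaces via a splitting of $u_0$ and (weak) Young, and then a near/far decomposition of the master integral --- is the natural one and is surely close in spirit to the proof the paper defers to (\cite[Theorems 5.1--5.5]{CQW}). Part (a) for $p<\infty$ and the $M^{p_c}$ half of part (b) are correct as written. There are, however, two gaps. The minor one: part (a) includes $p=\infty$ (note $p_c<\infty$ since $2s<N$), and ``dominated convergence in $L^\infty(B_\mu)$'' is not a valid step --- pointwise a.e.\ convergence plus an $L^\infty$ majorant does not yield uniform convergence. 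This is easily repaired with the very split you use in (b): the near part is at most $\omega(\delta)\|\Phi\|_{L^\infty(B_\mu)}$, finite because $u_0\in L^1\cap L^\infty_{\rm loc}$, and the far part is at most $CM(\delta t^{\a/(2s)})^{2s-N}\to0$ uniformly in $x$.

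The substantive gap, which you half-acknowledge, is the $L^{p_c}(B_{g(t)})$ statement in (b). Your two requirements on $\delta(t)$ pull in opposite directions: the far part forces the \emph{lower} bound $\delta(t)\gg g(t)t^{-\a/(2s)}$, while killing the near part requires $\omega(\delta(t))\,\|\Phi\|_{L^{p_c}(B_{g(t)})}\to0$, i.e.\ $\delta(t)\to0$ \emph{fast}, because $\|\Phi\|_{L^{p_c}(B_{g(t)})}$ genuinely diverges: already for $u_0=\mathbf 1_{B_1}$ one has $\Phi(x)\asymp|x|^{2s-N}$ at infinity, hence $\|\Phi\|_{L^{p_c}(B_R)}^{p_c}\asymp\log R$ since $(N-2s)p_c=N$. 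Whether both constraints are simultaneously satisfiable depends on how fast $\omega(\delta)=\sup_{|\eta|<\delta}|g(\eta)-\kappa|$ vanishes, and \eqref{eq:constant.Nge2s} provides no rate: for a slowly vanishing modulus such as $\omega(\delta)\sim 1/\log\log(1/\delta)$ and, say, $g(t)=t^{\a/(2s)}/\log t$, the unavoidable quantity $\omega\big(Cg(t)t^{-\a/(2s)}\big)(\log g(t))^{1/p_c}$ blows up. So the ``slack provided by $g(t)=o(t^{\a/(2s)})$'' does not by itself make the double balancing feasible. To close this step you need a quantitative version of \eqref{eq:constant.Nge2s}, e.g.\ a power-type error bound $|g(\eta)-\kappa|\le C|\eta|^{\epsilon}$ near the origin (which does hold and can be extracted from the refined asymptotics of $F$ in \cite{Kim-Lim-2016}), or else a genuinely different treatment of the near region; as written, the $L^{p_c}$ convergence on the expanding balls is asserted rather than proved.
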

\begin{proof} 
The proof follows those of \cite[Theorems 5.1--5.5]{CQW}.
\end{proof}

For the critical value  $2s=N=1$ the  decay rate has a logarithmic correction that makes the decay a bit slower than $t^{-\alpha}$. Moreover, the asymptotic profile is not given by the Newtonian potential of the initial datum, but by a constant. The same asymptotic constant gives the profile in expanding balls $B_{g(t)}$ as long as the decay rate is the same as in compact sets, which is the case as long as $\log g(t)/\log t\to0$.
\begin{teo}\label{teo-compactos N=2 todo p}  $2s=N=1$,  $\kappa>0$ as in~\eqref{eq:constant.Nge2s}, and  $u_0\in L^1(\R)$.
	\begin{itemize}
		\item[(a)] Assume in addition that $u_0\in L^q_{\rm loc}(\R)$ for some $q\in (1,\infty]$ if $p=\infty$. Then, for every $p\in[1,\infty]$ and  $\mu>0$,
		\[
		\Big\|\frac{t^\a}{\log t}u(\cdot,t)-M \kappa\a\Big\|_{L^p(B_\mu)}\to0\quad\mbox{as } t\to\infty.
		\]
		\item[(b)] Assume in addition that $u_0\in L^q(\R)$ for some some $q\in (1,\infty]$ if $p=\infty$. Then, for every $p\in[1,\infty]$ and $g:\mathbb{R}_+\to\mathbb{R}_+$ such that $g(t)\to\infty$ and  $\log g(t)/\log t\to0$ as $t\to\infty$,
		\[
		(g(t))^{-\frac1p}\Big\|\frac{t^\a}{\log t}u(\cdot,t)-M \kappa\a\Big\|_{L^p(B_{g(t)})}\to0\quad\mbox{as } t\to\infty.
		\]
	\end{itemize}
	\end{teo}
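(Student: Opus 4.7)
Since $N = 2s = 1$ we have $\alpha N/(2s) = \alpha$, so self-similarity of $Z$ gives
\[
t^\alpha u(x,t) = \int_{\mathbb{R}} F\bigl((x-y)t^{-\alpha}\bigr)\,u_0(y)\,dy.
\]
The key input is~\eqref{eq:constant.Nge2s} applied to $E_{1,1/2}(\xi) = -\log|\xi|$: for every $\varepsilon > 0$ there exists $\delta_0 > 0$ with
\[
\bigl|F(\xi) + \kappa\log|\xi|\bigr| \le \varepsilon\bigl|\log|\xi|\bigr|\qquad\text{for } 0 < |\xi| < \delta_0.
\]
Heuristically, when $x$ stays bounded and $y$ lies in a bounded set, $F((x-y)t^{-\alpha}) \approx \kappa\alpha\log t - \kappa\log|x-y|$ for large $t$, so integrating against $u_0$ produces the leading term $M\kappa\alpha\log t$, with everything else being $o(\log t)$ in $L^p$.

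For part (a), given $\varepsilon > 0$ I would choose $R > \mu+1$ with $\int_{|y|>R}u_0 < \varepsilon$ and $\delta \in (0,1/2)$ small enough that the quantitative asymptotic applies on $|\xi| \le 2\delta$ for $t$ large. Decompose the $y$-integral as $J_1+J_2+J_3$ over $\{|y|\le R\}$, $\{R<|y|\le \delta t^\alpha\}$, $\{|y|>\delta t^\alpha\}$. On $J_1$ the asymptotic gives, for $t$ large,
\[
J_1(x,t) = \kappa\alpha M_R\log t - \kappa\!\int_{|y|\le R}\!\log|x-y|\,u_0(y)\,dy + \rho(x,t),
\]
with $|\rho(x,t)| \le \varepsilon\alpha M\log t + \varepsilon W_R(x)$, where $W_R(x) := \int_{|y|\le R}|\log|x-y||\,u_0(y)\,dy$. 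A Young's inequality argument splitting the kernel $|\log|\cdot||$ at $|z|=1$ places $W_R$ in $L^p(B_\mu)$; the case $p=\infty$ is precisely where the hypothesis $u_0\in L^q_{\rm loc}$ for some $q>1$ enters. On $J_3$, one has $|(x-y)t^{-\alpha}|\ge \delta/2$, a range on which $F$ is bounded by~\eqref{eq:global.bounds.F}, so $|J_3(x,t)| \le C_\delta\!\int_{|y|>\delta t^\alpha}u_0 \to 0$.

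The main obstacle is the middle piece $J_2$. On its range $|(x-y)t^{-\alpha}|\le 2\delta<1$, so~\eqref{eq:global.bounds.F} yields $|F((x-y)t^{-\alpha})| \le C\bigl(1 + |\log|(x-y)t^{-\alpha}||\bigr) = C\bigl(1+\alpha\log t - \log|x-y|\bigr)$; since $\log|x-y|\in[\log(R-\mu),\log(\mu+\delta t^\alpha)]$ is positive and bounded above by $\alpha\log t + O(1)$, we get the uniform bound $|F((x-y)t^{-\alpha})| \le C\log t$ throughout the range, and hence $|J_2(x,t)| \le C\log t\!\int_{|y|>R}u_0 \le C\varepsilon\log t$. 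This step is delicate because the $y$-window has size $t^\alpha$ and the logarithmic singularity of $F$ threatens to blow things up; the saving comes from the cancellation $\alpha\log t - \log|x-y|$, which is exactly of order $\log t$. Assembling all three pieces and dividing by $\log t$ gives $\limsup_{t\to\infty}\|t^\alpha u/\log t - M\kappa\alpha\|_{L^p(B_\mu)}\le C\varepsilon$, whence (a).

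Part (b) follows the same template on the expanding ball $B_{g(t)}$. The factor $g(t)^{-1/p}$ compensates $|B_{g(t)}|^{1/p}$, and the condition $\log g(t)/\log t\to 0$ is exactly what absorbs the extra $\log g(t)$ growth of $W_R$ and of the middle-region estimate, since on $B_{g(t)}$ one has $|\log|x-y|| \le \log g(t)+O(1)$ away from the diagonal and $\log(g(t)+\delta t^\alpha) \le \alpha\log t+O(1)$. For $p=\infty$ the strengthened assumption $u_0\in L^q(\mathbb{R})$ for some $q>1$ replaces $L^q_{\rm loc}$, giving a uniform bound on the near-diagonal part of $W_R$ along the expanding ball.
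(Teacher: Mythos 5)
The paper itself gives no self-contained argument here --- it delegates to the proof of Theorem 5.6 of \cite{CQW} --- so I can only judge your write-up on its own terms; it is the natural direct argument and, for part (a), it is essentially complete and correct. The decomposition into $J_1,J_2,J_3$, the use of \eqref{eq:constant.Nge2s} in the quantitative form $|F(\xi)+\kappa\log|\xi||\le\varepsilon|\log|\xi||$ for $|\xi|$ small, the Young/H\"older treatment of the logarithmic potential $W_R$ (with $u_0\in L^q_{\rm loc}$, $q>1$, entering exactly at $p=\infty$), and the key observation that on $J_2$ one has $|x-y|\ge R-\mu\ge 1$ so that $|\log(|x-y|t^{-\alpha})|=\alpha\log t-\log|x-y|\le\alpha\log t$, all check out and assemble into $\limsup_{t\to\infty}\|t^{\alpha}u/\log t-M\kappa\alpha\|_{L^p(B_\mu)}\le C\varepsilon$.

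Part (b), however, does not quite ``follow the same template'': the step that breaks is precisely your $J_2$ bound. When $x$ ranges over $B_{g(t)}$ and $y$ over $\{R<|y|\le\delta t^{\alpha}\}$, the difference $|x-y|$ can be arbitrarily small (the near-diagonal set now lives inside the expanding ball, not inside $B_{R+1}$), so the inequality $\log|x-y|\ge 0$ fails and the uniform bound $|F((x-y)t^{-\alpha})|\le C\log t$ is no longer valid pointwise on that range. You must split off $\{|x-y|\le 1\}$ inside $J_2$ and control $\int_{|x-y|\le1}|\log|x-y||\,u_0(y)\,dy$ by Young's inequality (for $p<\infty$) or by H\"older with the \emph{global} hypothesis $u_0\in L^q(\R)$ (for $p=\infty$); this, and not the near-diagonal part of $W_R$ as you defined it (which integrates only over $|y|\le R$ and is therefore still a local matter), is where the strengthening from $L^q_{\rm loc}$ to $L^q$ is actually needed. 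The repair uses only tools already present in your argument, so this is a fixable imprecision rather than a fatal flaw, but as written the justification of (b) is incomplete and the role of the global $L^q$ assumption is misattributed.
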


	\begin{proof} The proof follows that of \cite[Theorem 5.6]{CQW}.\end{proof}

	The result for $2s>1$ is a direct  corollary of Theorem~\ref{L^p}. The asymptotic profile is a constant, which coincides with the one giving the asymptotic behavior in intermediate scales. In fact, the result is valid in expanding balls $B_{g(t)}$, if $g$ grows slowly to infinity.
	\begin{teo}
		\label{thm:compact.N=1}
		Let $2s>N=1$, $u_0\in L^1(\mathbb{R})$, and $p\in [1,\infty]$.
		\begin{itemize}
			\item[(a)] For any  $\mu>0$, $
			\|t^{\frac\alpha{2s}} u(\cdot,t)-MF(0)
			\|_{L^p(B_\mu)}\to0$ as $t\to\infty$.
			\item[(b)] For any $g$ satisfying \eqref{eq:intermediate.scales}, $
			(g(t))^{-\frac1p}\|t^{\frac\alpha{2s}} u(\cdot,t)-MF(0)
			\|_{L^p(B_{g(t)})}\to0$ as $t\to\infty$.
		\end{itemize}
	\end{teo}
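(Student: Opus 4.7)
The proof is indeed a direct consequence of the $L^\infty$ case of Theorem~\ref{L^p}, together with the continuity of the profile $F$ at the origin (which holds precisely because $2s>N=1$). The plan is to split
\[
t^{\frac{\alpha}{2s}} u(x,t)-MF(0)=t^{\frac{\alpha}{2s}}\bigl(u(x,t)-MZ(x,t)\bigr)+M\bigl(F(xt^{-\frac{\alpha}{2s}})-F(0)\bigr),
\]
where we have used the self-similar form $Z(x,t)=t^{-\alpha/(2s)}F(xt^{-\alpha/(2s)})$, and to estimate the two pieces separately in $L^p(B_{g(t)})$, with part~(a) corresponding to the choice $g\equiv\mu$ (for which $g(t)t^{-\alpha/(2s)}\to0$ is automatic).

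For the first piece, since $2s>N=1$ every $p\in[1,\infty]$ is subcritical, so Theorem~\ref{L^p} in the case $p=\infty$ gives
\[
\|u(\cdot,t)-MZ(\cdot,t)\|_{L^\infty(\R)}=o\bigl(t^{-\frac{\alpha}{2s}}\bigr)\quad\text{as }t\to\infty.
\]
A trivial bound of the $L^p$-norm on $B_{g(t)}$ by $(2g(t))^{1/p}$ times the $L^\infty$-norm on $\mathbb R$ then yields
\[
(g(t))^{-\frac1p}\,t^{\frac{\alpha}{2s}}\|u(\cdot,t)-MZ(\cdot,t)\|_{L^p(B_{g(t)})}\le 2^{\frac1p}\,t^{\frac{\alpha}{2s}}\|u(\cdot,t)-MZ(\cdot,t)\|_{L^\infty(\R)}\longrightarrow 0.
\]

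For the second piece, the hypothesis $g(t)=o(t^{\alpha/(2s)})$ combined with the fact that $F$ is continuous at the origin when $2s>N=1$ (recall $F(\xi)\to\kappa=F(0)$ as $|\xi|\to0$ in this range) gives
\[
\sup_{|x|\le g(t)}\bigl|F(xt^{-\frac{\alpha}{2s}})-F(0)\bigr|\longrightarrow 0\quad\text{as }t\to\infty,
\]
and again bounding the $L^p(B_{g(t)})$-norm by $(2g(t))^{1/p}$ times the sup, the factor $(g(t))^{-1/p}$ absorbs the volume term and the difference tends to zero. Adding the two estimates and using the triangle inequality proves part~(b), and part~(a) is the special case $g\equiv\mu$ (the $(g(t))^{-1/p}$ weight then being a harmless constant).

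There is no substantive obstacle here: the only thing one must notice is that all $p\in[1,\infty]$ are subcritical when $2s>N=1$, so the sharp $L^\infty$ decay provided by Theorem~\ref{L^p} is available, and that in this regime $F$ is actually continuous at the origin, so the local limit $F(0)$ makes sense. Both facts are already recorded in the introduction.
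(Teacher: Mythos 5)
Your proposal is correct and follows exactly the route the paper intends: the paper gives no written proof, stating only that the result is a direct corollary of Theorem~\ref{L^p}, and your decomposition into $t^{\frac{\alpha}{2s}}(u-MZ)$ plus $M\bigl(F(xt^{-\frac{\alpha}{2s}})-F(0)\bigr)$, using the subcritical $p=\infty$ case of that theorem together with the continuity of $F$ at the origin for $2s>N=1$, is precisely the intended argument. The only detail worth noting is that part~(a) does not literally satisfy \eqref{eq:intermediate.scales} since $g\equiv\mu$ does not tend to infinity, but as you observe the argument only uses $g(t)=o(t^{\frac{\alpha}{2s}})$, so this is harmless.
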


\section{Fast scales}\label{sect:fast.scales}
\setcounter{equation}{0}

It is in fast scales that we get the more novel results of the paper, differing qualitatively from those available for the case $s=1$. The difference stems from the large tail of the fundamental solution.

Our first result analyzes the large-time behavior  for solutions with initial data that decay at infinity at least as fast as the fundamental solution. 

\begin{teo}
\label{thm:fast.decay} Let $u_0\in L^1(\R^N)\cap \mathcal{D}_\beta$, with $\beta\ge N+2s$. Then for every $\nu>0$,	
\begin{equation*}\label{eq-beta-large}
u(x,t)=MZ(x,t)\big(1+o(1)\big)\quad\mbox{uniformly in }|x|\ge \nu t^{\frac\a{2s}}\quad\text{as }t\to\infty.
\end{equation*}
\end{teo}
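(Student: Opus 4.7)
The aim is uniform convergence of the ratio $u(x,t)/(MZ(x,t))$ to $1$ on $\{|x|\ge\nu t^{\alpha/(2s)}\}$, or equivalently uniform convergence to zero of
\begin{equation*}
\frac{u(x,t)-MZ(x,t)}{Z(x,t)}=\frac{1}{F(\xi)}\int_{\R^N}[F(\xi-\eta)-F(\xi)]\,u_0(y)\,dy,
\end{equation*}
written in the self-similar variables $\xi=xt^{-\alpha/(2s)}$, $\eta=yt^{-\alpha/(2s)}$. The preliminary step is to extract from \eqref{eq:global.bounds.F}--\eqref{eq:estimates.gradient} two pointwise bounds, valid uniformly in all three regimes $2s<N$, $2s=N=1$, and $2s>N=1$: a two-sided estimate $F(\xi)\asymp(1+|\xi|)^{-(N+2s)}$ for $|\xi|\ge\nu$ (combining continuity and positivity of $F$ on the annulus $\nu\le|\xi|\le R$ with $|\xi|^{N+2s}F(\xi)\to\hat\kappa$ at infinity), and a gradient bound $|DF(\xi)|\le C_\nu(1+|\xi|)^{-(N+2s+1)}$ for $|\xi|\ge\nu/2$.

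I would then split the integral into $\{|y|<|x|/2\}$ and $\{|y|\ge|x|/2\}$. For the close region, the Mean Value Theorem together with $|\xi-\theta\eta|\ge|\xi|/2\ge\nu/2$ and the two pointwise bounds yield
\begin{equation*}
\frac{|F(\xi-\eta)-F(\xi)|}{F(\xi)}\le C_\nu\,\frac{|y|}{|x|+t^{\alpha/(2s)}},
\end{equation*}
so the close contribution is bounded by $\frac{C_\nu}{|x|+t^{\alpha/(2s)}}\int_{|y|<|x|/2}|y|\,u_0(y)\,dy$. Given $\varepsilon>0$, I would choose $A$ with $\int_{|y|\ge A}u_0<\varepsilon$ and estimate $\int_{|y|<|x|/2}|y|\,u_0\le A\|u_0\|_{L^1(\R^N)}+\varepsilon|x|/2$; using $|x|+t^{\alpha/(2s)}\ge\nu t^{\alpha/(2s)}$, the close contribution is at most $C_\nu A\|u_0\|_{L^1(\R^N)}t^{-\alpha/(2s)}+C_\nu\varepsilon/2$, below $C_\nu\varepsilon$ for $t$ large, uniformly in $|x|\ge\nu t^{\alpha/(2s)}$. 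Notice this part uses only $u_0\in L^1$.

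For the far region I would use $|F(\xi-\eta)-F(\xi)|\le F(\xi-\eta)+F(\xi)$. The $F(\xi)$ piece reduces to $\int_{|y|\ge|x|/2}u_0$, which is $o(1)$ by integrability and $|x|\to\infty$, while the $F(\xi-\eta)$ piece is where the hypothesis $u_0\in\mathcal{D}_\beta$, $\beta\ge N+2s$, enters. For $t$ large enough that $|x|/2\ge R$, $u_0(y)\le C(|x|/2)^{-\beta}$ on $\{|y|\ge|x|/2\}$, and $\int_{\R^N}F(\xi-\eta)\,dy=t^{\alpha N/(2s)}$ since $\int F=1$, so
\begin{equation*}
\int_{|y|\ge|x|/2}F(\xi-\eta)u_0(y)\,dy\le C|x|^{-\beta}t^{\alpha N/(2s)}.
\end{equation*}
Dividing by the lower bound $F(\xi)\ge c_\nu|x|^{-(N+2s)}t^{\alpha(N+2s)/(2s)}$ gives $C_\nu|x|^{N+2s-\beta}t^{-\alpha}\le C_\nu\nu^{N+2s-\beta}t^{\alpha(N-\beta)/(2s)}$, which vanishes uniformly on $\{|x|\ge\nu t^{\alpha/(2s)}\}$ because $\beta>N$.

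The main obstacle I anticipate is the careful unification of the pointwise estimates on $F$ and $DF$ across the three regimes so that the constants depend only on $\nu$; once these are isolated, the splitting is quite robust. Conceptually, the key novelty compared with the case $s=1$ is that the powerlike lower bound $F(\xi)\gtrsim|\xi|^{-(N+2s)}$ keeps the division by $F(\xi)$ uniformly harmless on the entire outer region, whereas the exponential decay of the Gaussian profile for $s=1$ forces an upper cutoff on $|x|$.
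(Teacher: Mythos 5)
Your proof is correct. It follows the same skeleton as the paper's argument (split the representation formula according to the size of $|y|$ relative to $|x|$; control the near region by the Mean Value Theorem and the decay of $DF$; control the far region by the decay of $u_0$ together with $\int F=1$ for one piece and the tail of $\int u_0$ for the other; and divide throughout by the power-law lower bound $F(\xi)\ge c_\nu|\xi|^{-(N+2s)}$, which is exactly the paper's estimate \eqref{eq:estimate.F.below}). The one genuine difference is the cutoff: the paper splits at $|y|=\delta(t)|x|$ with a slowly vanishing $\delta(t)=t^{-\gamma}$, chosen so that the near term carries a factor $\delta(t)=o(1)$ while the far term, which picks up $(\delta(t))^{-\beta}$, still vanishes. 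You instead fix the cutoff at $|y|=|x|/2$ and recover smallness of the near term by the $\varepsilon$-splitting $\int_{|y|<|x|/2}|y|u_0\le A\,M+\varepsilon|x|/2$, which exploits that your Mean Value bound retains the factor $|y|/(|x|+t^{\frac{\alpha}{2s}})$ rather than being majorized crudely by $\delta(t)$. For $\beta\ge N+2s$ this is a legitimate simplification — the fixed cutoff removes the tension between the two conditions on $\delta(t)$ entirely — but it is worth noting that it does not extend to the companion result for $\beta\in(N,N+2s)$ (Theorem~\ref{thm:slow.decay}), where the balance between $\delta(t)\to0$ and $(\delta(t))^{-\beta}$ is precisely what determines the admissible upper range $|x|\le h(t)=o(t^{\frac{\alpha}{N+2s-\beta}})$; the paper's time-dependent $\delta$ is designed so that one decomposition serves both theorems. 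Your unification of the bounds on $F$ and $DF$ into $F(\xi)\asymp(1+|\xi|)^{-(N+2s)}$ and $|DF(\xi)|\le C_\nu(1+|\xi|)^{-(N+2s+1)}$ on $\{|\xi|\ge\nu/2\}$ is justified by \eqref{eq:global.bounds.F}--\eqref{eq:estimates.gradient} together with the positivity and smoothness of $F$ away from the origin, exactly as in the paper.
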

\begin{proof} We start by observing that, since  $F$ is positive, its behavior at infinity~\eqref{eq:constant.infty} implies that for all $\nu>0$ there is a constant $C_\nu>0$ such that 
\begin{equation}
\label{eq:estimate.F.below}
F(\xi)\ge C_\nu |\xi|^{-(N+2s)}\quad\text{for all }|\xi|\ge \nu. 
\end{equation}

Next, we split the error as
	$$
	\begin{aligned}
	|u(x,t)-M& Z(z,t)|\le  {\rm I}(x,t)+{\rm II}(x,t)+{\rm III}(x,t),\quad \text{where}\\
	{\rm I}(x,t)&=t^{-\frac{\a N}{2s}}\int_{|y|\le \delta(t)|x|} \big|F\big((x-y)t^{-\frac\a{2s}})-F(xt^{-\frac\a{2s}}\big)\big|u_0(y)\,dy,\\
	{\rm II}(x,t)&=t^{-\frac{\a N}{2s}}\int_{|y|\ge \delta(t)|x|} F\big((x-y)t^{-\frac\a{2s}}\big)u_0(y)\,dy,\\
	{\rm III}(x,t)&=t^{-\frac{\a N}{2s}}\int_{|y|\ge \delta(t)|x|} F\big(xt^{-\frac\a{2s}}\big)u_0(y)\,dy,
	\end{aligned}
	$$
	for some function $\delta=\delta(t)$ that will be specified later.

If  $|x|\ge \nu t^{\frac\a{2s}}$ and $|y|\le\delta(t)|x|$,  with
 $\delta(t)\in(0,1/2)$ for all $t\in\mathbb{R}_+$, then  
$$
 |xt^{-\frac\a{2s}}-\theta yt^{-\frac\a{2s}}|\ge |x|t^{-\frac\a{2s}}/2\ge\nu/2\quad\text{for all }\theta\in(0,1).
 $$
Therefore, using the estimates for $|DF|$ in \eqref{eq:estimates.gradient} and~\eqref{eq:estimate.F.below},
$$
\begin{aligned}
\big|F\big((x-y)t^{-\frac\a{2s}})-F(xt^{-\frac\a{2s}}\big)\big|&\le \frac{C_\nu|y|t^{-\frac\alpha{2s}}}{|xt^{-\frac\a{2s}}-\theta yt^{-\frac\a{2s}}|^{N+2s+1}}\le C 	\delta(t)|xt^{-\frac\a{2s}}|^{-(N+2s)}\\
&\le  C\delta(t)F(xt^{-\frac\a{2s}}).
\end{aligned}
$$
Hence, if $\delta(t)=o(1)$ as $t\to\infty$, we conclude that
$$
{\rm I}(x,t)\le C\delta(t) M t^{-\frac{\a N}{2s}}F(xt^{-\frac\a{2s}})=o(1)MZ(x,t)\quad\text{as }t\to\infty.
$$

On the other hand, using the decay assumption on the initial datum,
\begin{equation}
\label{eq:estimate.II}
\begin{aligned}
{\rm II}(x,t)&\le C t^{-\frac{\a N}{2s}}\int_{|y|>\delta(t)|x|}\frac{F((x-y)t^{-\frac\a{2s}})}{|y|^\beta}\,dy\le \frac{C t^{-\frac{\a N}{2s}}}{(\delta(t)|x|)^\beta}\int F((x-y)t^{-\frac\a{2s}})\,dy\\
&= C(\delta(t))^{-\beta}|x|^{-\beta}= C(\delta(t))^{-\beta}t^{-\frac{\a\beta}{2s}}|xt^{-\frac\a{2s}}|^{-(N+2s)}|xt^{-\frac\a{2s}}|^{-(\beta-N-2s)}.
\end{aligned}
\end{equation}
Hence, if $\beta\ge N+2s$ and  $|x|\ge \nu t^{\frac\a{2s}}$, $\nu>0$,  using~\eqref{eq:estimate.F.below} we get
$$
\begin{aligned}
{\rm II}(x,t)&\le C\nu^{-(\beta-N-2s)}(\delta(t))^{-\beta}t^{-\frac{\a\beta}{2s}}|xt^{-\frac\a{2s}}|^{-(N+2s)}
\\
&\le C(\delta(t))^{-\beta}t^{-\frac\a{2s}(\beta-N)}MZ(x,t)=o(1)MZ(x,t)
\end{aligned}
$$
if $\delta(t) t^{\frac\a{2s}\big(1-\frac N\beta\big)}\to\infty$.

Finally, if $|x|>\nu t^{\frac\a{2s}}$ with $\nu>0$, using once more~\eqref{eq:estimate.F.below}, 
$$
{\rm III}(x,t)\le Ct^{-\frac{\a N}{2s}} F\big(xt^{-\frac\a{2s}}\big)\int_{|y|>\delta(t)\nu t^{\frac\a{2s}}}u_0(y)\, dy=o(1)M  Z(x,t),
$$
if $\delta(t)t^{\frac\a{2s}}\to\infty$ as $t\to\infty$.
	
The choice $\delta(t)=t^{-\gamma}$ with $0<\gamma<\frac\a{2s}\big(1-\frac N\beta\big)$ fulfills all the required conditions. 
\end{proof}

We next turn our attention to initial data that do not decay necessarily as fast as the fundamental solution, but have still an integrable decay.

\begin{teo}
\label{thm:slow.decay} Let $u_0\in L^1(\R^N)\cap \mathcal{D}_\beta$, with $\beta\in(N,N+2s)$. Given  $\nu>0$ and  a function $h$ such that $h(t)=o\big(t^{\frac\a{N+2s-\beta}})$ as $t\to\infty$, then
\begin{equation*}
\label{eq-beta-small}
		u(x,t)=MZ(x,t)\big(1+o(1)\big)\quad\mbox{uniformly in }\nu t^{\frac\a{2s}}\le |x|\le h(t)\quad\text{as }t\to\infty.
\end{equation*}
\end{teo}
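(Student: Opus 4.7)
The plan is to follow the three-part decomposition and analysis used in the proof of Theorem~\ref{thm:fast.decay}, with the only genuinely new ingredient being a sharper treatment of term~${\rm II}$, where the decay rate of $u_0$ enters.

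Decompose $|u(x,t)-MZ(x,t)|\le {\rm I}(x,t)+{\rm II}(x,t)+{\rm III}(x,t)$ exactly as before, for some function $\delta(t)\in(0,1/2)$ that I will choose in terms of $h$. The analyses of ${\rm I}$ and ${\rm III}$ transfer verbatim: both yield $o(1)\,MZ(x,t)$ uniformly in $|x|\ge \nu t^{\alpha/(2s)}$ provided respectively $\delta(t)=o(1)$ and $\delta(t)t^{\alpha/(2s)}\to\infty$, since those arguments used only the gradient estimate~\eqref{eq:estimates.gradient}, the lower bound~\eqref{eq:estimate.F.below}, and $u_0\in L^1(\mathbb{R}^N)$, none of which are affected by the value of $\beta$.

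The novelty lies in the treatment of ${\rm II}$. The pointwise bound
$$
{\rm II}(x,t)\le C(\delta(t))^{-\beta}|x|^{-\beta},
$$
obtained in~\eqref{eq:estimate.II} from $u_0\in\mathcal{D}_\beta$ and $F\in L^1(\mathbb{R}^N)$, is valid for every $\beta>N$. Dividing by $MZ(x,t)\ge Ct^\alpha|x|^{-(N+2s)}$, which follows from~\eqref{eq:estimate.F.below} in the region $|x|\ge\nu t^{\alpha/(2s)}$, one obtains
$$
\frac{{\rm II}(x,t)}{MZ(x,t)}\le C(\delta(t))^{-\beta}\, t^{-\alpha}\,|x|^{N+2s-\beta}.
$$
Now the exponent $N+2s-\beta$ is positive, so the spatial factor must be controlled \emph{from above}; this is precisely where the upper restriction $|x|\le h(t)$ enters the argument, and the hypothesis $h(t)=o(t^{\alpha/(N+2s-\beta)})$ is exactly what forces $\epsilon(t):=t^{-\alpha}h(t)^{N+2s-\beta}\to 0$.

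It only remains to choose $\delta(t)$ so that $\delta(t)\to 0$, $\delta(t)t^{\alpha/(2s)}\to\infty$ and $(\delta(t))^{-\beta}\epsilon(t)\to 0$ hold simultaneously. The choice $\delta(t)=\epsilon(t)^{1/(2\beta)}$ makes the first and third conditions immediate (the third reduces to $\epsilon(t)^{1/2}\to 0$), while the second follows from the fact that the region is nontrivial only when $h(t)\ge \nu t^{\alpha/(2s)}$, which together with $\beta>N$ (so that $\alpha/(2s)<\alpha/(N+2s-\beta)$) yields by an elementary exponent computation a polynomial lower bound of the form $\delta(t)t^{\alpha/(2s)}\ge c\, t^{\alpha(\beta+N)/(4s\beta)}\to\infty$. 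The main ``obstacle'' is therefore just this careful balancing of $\delta(t)$; conceptually the proof is entirely parallel to Theorem~\ref{thm:fast.decay}, the substantive difference being that the sign of $N+2s-\beta$ is reversed, which is exactly what makes the upper bound on $|x|$ indispensable and the upper cutoff $h(t)$ sharp in the sense described in the introduction.
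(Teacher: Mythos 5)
Your proposal is correct and follows essentially the same route as the paper: the same three-term decomposition inherited from Theorem~\ref{thm:fast.decay}, the same reduction of the problem to the single condition $(\delta(t))^{-\beta}t^{-\alpha}h(t)^{N+2s-\beta}\to0$, and even the same balancing choice of $\delta$ (your $\delta(t)=\epsilon(t)^{1/(2\beta)}$ is exactly the paper's $(\delta(t))^{2\beta/(N+2s-\beta)}=h(t)t^{-\alpha/(N+2s-\beta)}$), with the matching exponent $\alpha(N+\beta)/(4s\beta)$ in the verification that $\delta(t)t^{\alpha/(2s)}\to\infty$. No gaps.
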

\begin{proof}We split the error as in the proof of Theorem~\ref{thm:fast.decay}. The terms ${\rm I}(x,t)$ and ${\rm III}(x,t)$ can be controlled exactly in the same way as there, just requiring  
\begin{equation}
\label{eq:first.conditions.delta}
\delta(t)=o(1),\quad\delta(t)t^{\frac\a{2s}}\to\infty\quad\text{as } t\to\infty, 
\end{equation}
since the above estimates of those terms do not use the decay of the initial datum. The difference arises in the estimate of~${\rm II}(x,t)$

We start from~\eqref{eq:estimate.II}, where we have only used that $\beta$ is nonnegative. If $\nu t^{\frac\a{2s}}\le |x|\le h(t)$, $\nu>0$,  using~\eqref{eq:estimate.F.below} we get
$$
{\rm II}(x,t)\le C(\delta(t))^{-\beta}t^{-\a}h(t)^{N+2s-\beta}MZ(x,t).
$$
Since we want to have ${\rm II}(x,t)=o(1)MZ(x,t)$, and $\beta<N+2s$, $h$ cannot grow arbitrarily fast. In order to have the desired bound we need 
\begin{equation}
\label{eq:second.condition.delta}
 h(t)=o\big((\delta(t))^{\frac\beta{N+2s-\beta}}t^{\frac\a{N+2s-\beta}}\big) \quad \text{as }t\to\infty.
\end{equation}

Our aim is to choose a function $\delta$ satisfying~\eqref{eq:first.conditions.delta}--\eqref{eq:second.condition.delta} allowing the fastest possible growth for $h$. Since $\delta(t)=o(1)$, it is clear from~\eqref{eq:second.condition.delta} that we cannot get from this proof anything better than $h(t)=o\big(t^{\frac\a{N+2s-\beta}})$. On the other hand, any such $h$ can be reached by means of the choice  
$$
(\delta(t))^{\frac{2\beta}{N+2s-\beta}}=h(t)t^{-\frac\a{N+2s-\beta}}.
$$ 
Indeed, this trivially guarantees $\delta=o(1)$ and the condition\eqref{eq:second.condition.delta}. As for the second condition in~\eqref{eq:first.conditions.delta}, since $h(t)\ge \nu t^{\frac\alpha{2s}}$, we have
$$
(\delta(t)t^{\frac\a{2s}})^{\frac{2\beta}{N+2s-\beta}}=h(t)t^{-\frac\a{N+2s-\beta}}t^{\frac{\a2\beta}{2s(N+2s-\beta)}}\ge\nu t^{\frac{\a(N+\beta)}{2s(N+2s-\beta)}}\to\infty,
$$
and we are done. 
\end{proof}

We end this section with a result that shows that the upper restriction in Theorem~\ref{thm:slow.decay} is not technical.

\begin{teo} Let $u_0\in L^1(\R^N)\cap \mathcal{D}_\beta$, with $\beta\in(N,N+2s)$, be such that $|x|^\beta u_0(x)\to A>0$ as $|x|\to\infty$. Given any $\nu>0$, and $h$  such that $h(t)\ge\nu t^{\frac\alpha{2s}}$ for all $t>0$,  $h(t)/t^{\frac\a{N+2s-\beta}}\to\infty$ as $t\to\infty$, then
	\begin{equation*}\label{eq-precise-far-far}
	u(x,t)=\frac A{|x|^\beta}\big(1+o(1)\big)\quad\mbox{uniformly in }|x|\ge h(t) \quad\text{as }t\to\infty.
	\end{equation*}

\end{teo}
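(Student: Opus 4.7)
The plan is to start from $u(x,t)=\int_{\mathbb{R}^N}Z(x-y,t)u_0(y)\,dy$ and split according to how far $y$ is from $x$:
\[
u(x,t)={\rm I}(x,t)+{\rm II}(x,t),
\]
where ${\rm I}$ integrates over the close set $\{|y-x|<\delta|x|\}$ and ${\rm II}$ over its complement, with $\delta>0$ a small parameter to be fixed at the very end. The intuition behind the statement is that the heavy algebraic tail of $F$ lets the solution inherit the pointwise size of $u_0$ at distances much larger than the diffusive scale, and this inheritance persists as long as $|x|$ does not exceed the threshold at which $Z(x,t)$ itself overtakes the decay $|x|^{-\beta}$ of $u_0$, which occurs precisely for $|x|\asymp t^{\alpha/(N+2s-\beta)}$.

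To handle ${\rm II}$ I would first observe that $\beta\in(N,N+2s)$ gives $\alpha/(N+2s-\beta)>\alpha/(2s)$, so the hypothesis $h(t)/t^{\alpha/(N+2s-\beta)}\to\infty$ forces $h(t)/t^{\alpha/(2s)}\to\infty$; in particular $\delta|x|\ge\delta h(t)\gg t^{\alpha/(2s)}$, and \eqref{eq:bounds.F.infty} yields $Z(x-y,t)\le Ct^\alpha|x-y|^{-(N+2s)}\le C(\delta|x|)^{-(N+2s)}t^\alpha$ throughout the far set. Combined with $\int u_0=M$, this gives
\[
{\rm II}(x,t)\le C\delta^{-(N+2s)}Mt^\alpha|x|^{-(N+2s)}=\frac{A}{|x|^\beta}\cdot O\!\left(\frac{t^\alpha}{|x|^{N+2s-\beta}}\right),
\]
and the last factor goes to zero uniformly on $|x|\ge h(t)$ by the growth assumption on $h$.

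For ${\rm I}$ the key point is that $|y-x|<\delta|x|$ confines $|y|$ to $[(1-\delta)|x|,(1+\delta)|x|]$, so $|y|^{-\beta}=|x|^{-\beta}(1+O(\delta))$, and the hypothesis $|y|^\beta u_0(y)\to A$ rewrites as $u_0(y)=A|y|^{-\beta}(1+\eta(y))$ with $\eta(y)\to 0$ uniformly on $|y|\ge(1-\delta)h(t)$. Pulling the factor $A|x|^{-\beta}$ outside,
\[
{\rm I}(x,t)=\frac{A\bigl(1+O(\delta)+o(1)\bigr)}{|x|^\beta}\int_{|z|<\delta|x|}Z(z,t)\,dz,
\]
and the remaining integral equals $\int_{|\xi|<\delta|x|t^{-\alpha/(2s)}}F(\xi)\,d\xi$, which tends to $\int_{\mathbb{R}^N}F=1$ uniformly in $|x|\ge h(t)$ because $\delta|x|t^{-\alpha/(2s)}\to\infty$ and $F\in L^1$. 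Adding the two estimates and letting $\delta\to 0$ after $t\to\infty$ produces $u(x,t)=A|x|^{-\beta}(1+o(1))$ uniformly in the stated region.

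The only technical nuisance is the simultaneous bookkeeping of three small quantities — the geometric splitting parameter $\delta$, the modulus of convergence of $|y|^\beta u_0(y)\to A$, and the tail mass $\int_{|\xi|>R}F$ — since $\delta$ must be fixed first and only then $t$ taken large enough; this is the order of limits that validates the heuristic above. Conceptually no genuine obstacle appears: the ingredient that makes the argument work, and that fails for $s=1$, is precisely the polynomial (rather than exponential) tail of $F$ at infinity.
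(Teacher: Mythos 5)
Your proof is correct, but it follows a genuinely different decomposition from the paper's. The paper adds and subtracts $A\int_{|y|>\delta|x|}Z(x-y,t)\frac{|x|^\beta}{|y|^\beta}\,dy$ and ends up with four terms, split according to whether $|y|\lessgtr\delta|x|$ (is $y$ near the origin, where the bulk of the $L^1$ mass of $u_0$ may sit?) and, within the far region, whether $|x-y|\lessgtr t^{\frac{\alpha}{2s}}\log t$; the auxiliary logarithmic scale is needed there because the comparison $|x|^\beta/|y|^\beta\approx 1$ is only valid when $|x-y|\ll|x|$, and the complementary piece is controlled by the tail mass $\int_{|\xi|>\log t}F$. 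You instead split once, on $|x-y|\lessgtr\delta|x|$: near $x$ you use $u_0(y)=A|y|^{-\beta}(1+o(1))=A|x|^{-\beta}(1+O(\delta)+o(1))$ together with $\int_{|\xi|<\delta|x|t^{-\alpha/2s}}F\to 1$ (valid since $h(t)t^{-\alpha/2s}\to\infty$, which indeed follows from $h(t)/t^{\frac{\alpha}{N+2s-\beta}}\to\infty$ because $N+2s-\beta<2s$), and far from $x$ you use only $F(\xi)\le C|\xi|^{-(N+2s)}$ and $\int u_0=M$ to get $\mathrm{II}\le C\delta^{-(N+2s)}Mt^\alpha|x|^{-(N+2s)}=A|x|^{-\beta}\,O\bigl(t^\alpha h(t)^{-(N+2s-\beta)}\bigr)=o(1)A|x|^{-\beta}$. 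Both arguments hinge on the same two facts --- the algebraic tail of $F$ and the balance $t^\alpha|x|^{-(N+2s)}\asymp|x|^{-\beta}$ at $|x|\asymp t^{\frac{\alpha}{N+2s-\beta}}$ --- but yours is more economical (two terms, no secondary $\log t$ scale), at the price of folding the hypothesis $|y|^\beta u_0(y)\to A$ into the main term rather than isolating it; your explicit remark that $\delta$ is fixed before $t\to\infty$ settles the only delicate bookkeeping point. One cosmetic caveat: the bound $Z(x-y,t)\le Ct^\alpha|x-y|^{-(N+2s)}$ via \eqref{eq:bounds.F.infty} requires $|x-y|t^{-\alpha/2s}\ge 1$, which on your far set holds only for $t$ large depending on $\delta$; since $\delta$ is fixed first, this is harmless, but it is worth stating.
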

\begin{proof} After adding and subtracting $A\int_{|y|>\delta|x|}Z(x-y,t)\frac{|x|^\beta}{|y|^\beta}\,dy$ for some fixed $\delta\in(0,1)$ to be chosen later, we have
\[
\begin{aligned}
\big||x|^\beta u(x,t)-A|&=\Big|\int Z(x-y,t)\big(|x|^\beta u_0(y)-A\big)\,dy\Big|\le {\rm I}(x,t)+{\rm II}(x,t)+{\rm III}(x,t)+{\rm IV}(x,t),\\
{\rm I}(x,t)&=
\int_{|y|<\delta |x|}Z(x-y,t)\big||x|^\beta u_0(y)-A\big|\,dy,\\
{\rm II}(x,t)&=A\int_{\scriptsize\begin{array}{c}|y|>\delta|x|\\
	|x-y|<t^{\frac\a{2s}}\log t\end{array}}Z(x-y,t)\Big|\frac{|x|^\beta}{|y|^\beta}-1\Big|\,dy,
\\
{\rm III}(x,t)&=A\int_{\scriptsize\begin{array}{c}|y|>\delta|x|\\
	|x-y|>t^{\frac\a{2s}}\log t\end{array}}Z(x-y,t)\Big|\frac{|x|^\beta}{|y|^\beta}-1\Big|\,dy,\\
{\rm IV}(x,t)&=\int_{|y|>\delta |x|}Z(x-y,t)\big||y|^\beta u_0(y)-A\big|\frac{|x|^\beta}{|y|^\beta} \,dy,
\end{aligned}
\]

If $|y|<\delta|x|$, $\delta\in (0,1)$, then $|x-y|>(1-\delta)|x|$. Therefore, using the estimate~\eqref{eq:bounds.F.infty},
$$
{\rm I}(x,t)\le  \frac{Ct^\a}{|x|^{N+2s}}
\int_{|y|<\delta |x|}\big||x|^\beta u_0(y)-A\big|\,dy.
$$
Since  $|x|>1$ in the region under consideration for $t$ large enough,  the integral on the right-hand side can be easily bounded by $C|x|^\beta$. Hence, if $|x|\ge h(t)$,
$$
{\rm I}(x,t)\le  \frac{Ct^\a}{|x|^{N+2s-\beta}}\le \frac{Ct^{\a}}{(h(t))^{N+2s-\beta}}\to0
$$
by hypothesis.

Under our assumptions $h(t)$ grows faster than $t^{\frac\a{2s}}\log t$. Hence, if $|x|\ge h(t)$ and $|x-y|<t^{\frac\a{2s}}\log t$, for all $t$ large $|x|>|x-y|$, and we have
\[\begin{aligned}
1-\frac{\beta t^{\frac\a{2s}}\log t}{h(t)}&\le1-\frac{\beta|x-y|}{|x|}\le \frac1{\big(1+\frac{|x-y|}{|x|}\big)^\beta}=\frac{|x|^\beta}{(|x|+|y-x|)^\beta}\\
&\le \frac{|x|^\beta}{|y|^\beta}
\le \frac{|x|^\beta}{(|x|-|y-x|)^\beta}=\frac1{\big(1-\frac{|x-y|}{|x|}\big)^\beta}\le1-\frac{\beta|x-y|}{|x|}\le 1+\frac{\beta t^{\frac\a{2s}}\log t}{h(t)}.
\end{aligned}
\]
Hence, $\displaystyle\Big|\frac{|x|^\beta}{|y|^\beta}-1\Big|\le \frac{\beta t^{\frac\a{2s}}\log t}{h(t)}$, so that	
\[
{\rm II}(x,t)\le \frac{Ct^{\frac\a{2s}}\log t}{h(t)}\int F(\xi)\,d\xi\to0\quad\mbox{as } t\to\infty,
\]
by the hypotheses on $h(t)$.
 
On the other hand, if $|y|>\delta|x|$, then $\Big|\frac{|x|^\beta}{|y|^\beta}-1\Big|\le \delta^{-\beta}+1$. Therefore,  after a change of variables,
\[
{\rm III}(x,t)\le C(1+\delta^{-\beta})\int_{|\xi|>\log t}F(\xi)\,d\xi\to0\quad\mbox{as } t\to\infty.
\]

Finally, if $|y|>\delta|x|\ge \delta h(t)$, on the one hand $\frac{|x|^\beta}{|y|^\beta}<\delta^{-\beta}$, and on the other hand $|y|\to\infty$ as $t\to\infty$. Therefore since $\big||y|^\beta u_0(y)-A\big|\to0$ as $|y|\to\infty$,  and $\int Z(\cdot,t)=1$, we conclude that ${\rm IV}(x,t)\to0$ as $t\to\infty$.
\end{proof}


\end{document}